\newcommand{\xlabel}[1]{
                        \label{#1}
                        \ifthenelse{\boolean{xlabels}}
                                   {\marginpar[\hfill{\tiny #1}]{{\tiny #1}}}
                                   {}
                       }
\newcommand{\CC}{\mathbb{C}}
\newcommand{\RR}{\mathbb{R}}
\newcommand{\PP}{\mathbb{P}}
\newcommand{\FF}{\mathbb{F}}
\newcommand{\sC}{{\mathcal C}}
\newcommand{\sD}{{\mathcal D}}
\newcommand{\sH}{{\mathcal H}}
\newcommand{\sK}{{\mathcal K}}
\newcommand{\sO}{{\mathcal O}}
\newcommand{\sV}{{\mathcal V}}
\newcommand{\sW}{{\mathcal W}}
\newcommand{\problem}[1]
           {\ifthenelse{\boolean{probleme}}
                       {{\color{red}(PROBLEM: #1)}}
                       {}
           }
\newcommand{\zukunft}[1]
           {\ifthenelse{\boolean{zukuenftiges}}
                       {{\bf(AUSBAUM\"OGLICHKEIT: #1)\bf}}
                       {}
           }
\newcommand{\extra}[1]
           {\ifthenelse{\boolean{extras}}
                       {{\bf EXTRA #1 EXTRA\bf}}
                       {}
           }
\newcommand{\ignore}[1]
           {\ifthenelse{\boolean{ignore}}
                       {{\bf IGNORE #1 IGNORE\bf}}
                       {}
           }
\DeclareMathOperator{\codim}{codim}
\DeclareMathOperator{\rank}{rank}
\DeclareMathOperator{\sing}{sing}
\DeclareMathOperator{\wdeg}{\deg_a}
\theoremstyle{plain}
\newtheorem{thm}{Theorem}
\newtheorem{lem}[thm]{Lemma}
\newtheorem{prop}[thm]{Proposition}
\numberwithin{thm}{subsection}
\newtheorem*{thm*}{Theorem}
\newtheorem*{conj*}{Conjecture}
\newtheorem*{verm*}{Vermutung}
\theoremstyle{definition}
\newtheorem{defn}[thm]{Definition}
\newtheorem{rem}[thm]{Remark}
\newtheorem{example}[thm]{Example}
\numberwithin{equation}{section}
\newcommand{\nosubsections}{\renewcommand{\thethm}{\thesection.\arabic{thm}}
                            \setcounter{thm}{0}
                           }
\newcommand{\cref}[3]{(\ref{#1}, #2 \ref{#3})}
\date{\today}
\theoremstyle{definition}
\DeclareMathOperator{\dx}{dx}
\DeclareMathOperator{\dy}{dy}
\DeclareMathOperator{\dz}{dz}
\newcommand{\zoladek}{\.Zo\l\c adek\,}
\begin{document}

\title{Darboux Integrability via Singularities of Invariant Curves at Infinity}

\author[von Bothmer]{Hans-Christian von Bothmer}
\address{Hans-Christian von Bothmer, Fachbereich Mathematik der Universit\"at Hamburg\\
Bundesstra\ss e 55\\
20146 Hamburg, Germany}
\email{hans.christian.v.bothmer@uni-hamburg.de}

\dedicatory{To the memory of Wolfgang Ebeling, for coffee, cookies, and singularities.}

\begin{abstract}
We extend the computation of the invariant $\eta(\omega,C,a)$ defined in \cite{newSolutions} to special points on the line at infinity and show that, as in the affine case, its value is determined purely by the geometry of the integral curve $C$. 
By incorporating points at infinity, the invariant $\eta$ yields effective geometric criteria that certify Darboux integrability in cases not covered by affine data alone.
As an application we construct six new codimension-11 components of the degree-3 center variety.
\end{abstract}

\maketitle

\section*{Introduction}
In 1878 Darboux showed that first integrals of planar polynomial vector fields can be obtained by studying their algebraic invariant curves and the associated cofactors \cite{Darboux}. In particular, if the cofactors of several invariant curves satisfy a nontrivial linear relation, then a rational first integral exists and can be written explicitly as a Darboux product. Darboux also observed—and later results made precise—that the existence of sufficiently many invariant curves forces such a dependence automatically. The number of invariant curves required can in fact be reduced considerably by taking into account the multiplicities of the invariant curves and the singularities {\sl of the differential form} \cite{MultiplicityDarboux}.

In \cite{newSolutions} Darboux theory was developed in a different direction by taking into account the singularities {\sl of the integral curves} themselves. This makes the computation entirely independent of the differential form and hence applies uniformly to all differential forms admitting a fixed configuration of integral curves. While \cite{newSolutions} treated only affine singularities, the present note extends the approach to the line at infinity and, in Theorem~\ref{tEtaCriterion}, treats both affine and infinite singularities on an equal footing.

A central application of Darboux theory is to the Poincaré center problem, which seeks criteria ensuring that a singularity of a planar polynomial vector field is a center rather than a focus \cite{Poincare1885}. The existence of a rational first integral forces trajectories to be closed, so Darboux integrability provides a direct mechanism for producing centers \cite{Darboux}. In the quadratic case every center arises this way, yielding a complete classical description \cite{Dulac}. For cubic systems, many centers are explained by Darboux integrability and substantial classifications are known, but a full classification of Darboux-integrable cubic vector fields remains open.

Lacking a complete classification, a complementary approach is to construct families of Darboux-integrable degree-3 centers, with the hope of eventually capturing all of them and guiding a fuller classification through examples. The first systematic list of such examples was compiled by \zoladek in \cite{ZoladekCorrection}. Numerical evidence over finite fields \cite{survey,ste:2011} seems to suggest that we are still far from a complete list: in codimension 11, for instance, we found only four components of the center variety in the literature \cite{torregrosa}, whereas characteristic-$p$ data seem to suggest there are at least 97.

To assess the usefulness of our refinement of Darboux’s method, we exhibit
six new codimension-11 components of the degree-3 center variety and recover
two previously known ones. More precisely, we work with submaximal sextics
$C$ (degree-6 curves with simple singularities of total Milnor number $18$) for
which the line at infinity is bitangent to $C$. These curves enjoy several
favorable numerical coincidences: the expected counts predict
\begin{enumerate}
\item a unique degree-3 polynomial $1$-form $\omega$ (up to scale) having $C$
      as an integral curve;
\item a single zero of $\omega$ outside $C \cup L_\infty$; and
\item an expected codimension $11$ for the family of normalized differential forms obtained by varying $C$ in its equisingular family.
\end{enumerate}
With a careful choice of the singularity types and component structure of 
$C$, our extension of Darboux’s method to infinity guarantees the existence 
of an integrating factor and hence a center at the extra zero of $\omega$. 

Across all our examples, Darboux integrability cannot be certified from affine data alone—singularities at infinity provide the missing constraints. This shows that extending $\eta$ to singularities at 
infinity can substantially expand the range of accessible examples.

The paper is organized into the following sections:
\begin{enumerate}[label=\arabic*.]
\item We fix notation and recall Darboux’s method.
\item We review the \emph{inverse problem} (finding all differential forms having a given curve as an integral curve). The general solution is due to \cite{inverseProblemWalcherSuggestion}. We recall a version that incorporates the singularities of $C$ following \cite{newSolutions}.
\item We recall the definition of our invariant $\eta$ from \cite{newSolutions} and how, at affine points, $\eta$ can be computed purely from the geometry of $C$.
\item We extend the definition of $\eta$ to the line at infinity and show how to compute it there (Proposition~\ref{pEtaInfinity}). The main result of this paper, our extension of Darboux integrability, is stated in Theorem~\ref{tEtaCriterion}.
\item We obtain an upper bound for the number of zeros of $\omega$ outside $C$ in terms of the singularities of $C$.
\item We summarize what is known about the Poincaré center problem in degree~3 over finite fields.
\item We give a blueprint for constructing codimension-11 components from submaximal sextics and use characteristic-$p$ information to select promising baskets of singularities. We then construct six new components and show how two previously known ones are recovered by the same method.
\end{enumerate}

\medskip
\noindent\textit{Acknowledgment.} 
The author would like to acknowledge the use of ChatGPT for assistance with the English language, improving the flow of the exposition, and researching relevant literature. 
The first draft of the Chern class computations in Lemma~\ref{lChern} and Proposition~\ref{pNumZeros} were also produced by ChatGPT. 
All suggestions generated by ChatGPT were carefully checked and verified by the author.

\section{Preliminaries} \label{sPrelim}
\nosubsections

In this article, we describe a plane autonomous system by a differential form $\omega = P\dx + Q\dy$ where $P, Q \in \CC[x,y]$ are polynomials of degree at most $d$.
\begin{defn}
Let $\omega$ be a differential form, and let $F, \mu \in \CC[[x,y]]$ be power series. If
\[
    dF = \mu \omega
\]
then $F$ is called a {\sl first integral} and $\mu$ an {\sl integrating factor} of $\omega$
\end{defn}

For a given $\omega$ it can often be very difficult to decide, whether a first
integral exists. Darboux realized in 1878 that the existence of algebraic
integral curves can help to answer this question:

\begin{defn}
Let $C \in \CC[x,y]$ be a polynomial and $\{ C=0 \}$ the plane algebraic curve defined by $C$. The zero locus
$\{C=0\}$ is called an {\sl algebraic integral curve} of a differential form $\omega$ if and only if
\[
    dC \wedge \omega |_C = 0 \iff dC \wedge \omega = C\cdot K_C
\]
In this situation the $2$-form $K_C$ is called the {\sl cofactor} of $C$. In a slight abuse of notation,
we  also denote the algebraic curve $\{C=0\}$ by the same letter $C$.
\end{defn}

\begin{thm}[Darboux 1878]
Let $\omega$ be a differential form, $C_1,\dots,C_r$ algebraic integral
curves of $\omega$ and $K_1,\dots,K_r$ their cofactors.
\begin{itemize}
\item If $\sum \alpha_i K_i = -d\omega$ for appropriate $\alpha_i \in \CC$ then
$\mu = \prod C_i^{\alpha_i}$ is a rational integrating factor of $\omega$.
\item If $\sum \alpha_i K_i = 0$ for appropriate $\alpha_i \in \CC$ then
$F = \prod C_i^{\alpha_i}$ is a first integral of $\omega$.
\end{itemize}
\end{thm}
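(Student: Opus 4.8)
The plan is to reduce both assertions to a single computation with logarithmic derivatives. First I would introduce the rational logarithmic $1$-form
\[
  \Omega := \sum_{i=1}^{r} \alpha_i \frac{dC_i}{C_i},
\]
the formal logarithmic derivative of the Darboux product $\prod_i C_i^{\alpha_i}$. Dividing the defining relation $dC_i \wedge \omega = C_i K_i$ of each cofactor by $C_i$ in the field of rational functions gives $\tfrac{dC_i}{C_i} \wedge \omega = K_i$, and wedging $\Omega$ with $\omega$ then yields the single identity
\[
  \Omega \wedge \omega = \sum_{i=1}^{r} \alpha_i K_i,
\]
valid as rational $2$-forms away from the union of the $C_i$. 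Everything else will follow by feeding the two hypotheses into this identity.

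For the first bullet I would set $\mu = \prod_i C_i^{\alpha_i}$ and use logarithmic differentiation $d\mu = \mu\,\Omega$ to compute
\[
  d(\mu\omega) = d\mu \wedge \omega + \mu\,d\omega
  = \mu\bigl(\Omega \wedge \omega + d\omega\bigr)
  = \mu\Bigl(\textstyle\sum_i \alpha_i K_i + d\omega\Bigr).
\]
Under the hypothesis $\sum_i \alpha_i K_i = -d\omega$ the bracket vanishes, so $\mu\omega$ is closed; the (formal, or local) Poincaré lemma then produces an $F$ with $dF = \mu\omega$, which is exactly the assertion that $\mu$ is an integrating factor.

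For the second bullet I would run the same computation with $F = \prod_i C_i^{\alpha_i}$ in place of $\mu$, so that $dF = F\,\Omega$ and hence $dF \wedge \omega = F\,(\Omega \wedge \omega) = F\sum_i \alpha_i K_i$. If $\sum_i \alpha_i K_i = 0$ this wedge product is zero, and since two $1$-forms in two variables with vanishing wedge product are proportional (wherever $\omega \neq 0$), we obtain $dF = \mu\omega$ for a suitable rational $\mu$; that is, $F$ is a first integral.

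The algebraic steps---logarithmic differentiation and the two-variable proportionality criterion---are routine. The hard part will instead be interpreting the symbol $\prod_i C_i^{\alpha_i}$ when the $\alpha_i$ are not integers: the Darboux product is then multivalued, defined only up to monodromy, whereas the objects that genuinely enter the argument (the logarithmic form $\Omega$, the closed form $\mu\omega$, and the proportionality $dF = \mu\omega$) are single-valued. I would therefore organize the proof so that all manipulations are performed with $\Omega$ and the logarithmic derivatives $dC_i/C_i$, treating $\prod_i C_i^{\alpha_i}$ merely as shorthand for the local primitive extracted at the very end from the (well-defined) closed or proportional form.
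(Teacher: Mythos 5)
Your proposal is correct and follows essentially the same route as the paper: the paper's proof also computes $d(\mu\omega)=\mu\bigl(\sum\alpha_i\,d\log C_i\wedge\omega+d\omega\bigr)$ and $dF\wedge\omega=F\sum\alpha_i K_i$ via logarithmic differentiation, your form $\Omega$ being exactly its $d\log\mu$. Your explicit remarks on the Poincar\'e lemma, the proportionality criterion behind $dF=\mu\omega\iff dF\wedge\omega=0$, and the multivaluedness of $\prod_i C_i^{\alpha_i}$ merely spell out steps the paper leaves implicit.
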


\begin{proof}
For the first claim we calculate:
\begin{align*}
    d(\mu \omega)
    &= d\mu \wedge \omega + \mu d \omega \\
    & = \mu \bigl( \frac{d \mu}{\mu} \wedge \omega + d\omega \bigr) \\
    & = \mu ( d\log \mu \wedge \omega + d\omega) \\
    &= \mu \bigl( \sum \alpha_i d \log C_i \wedge \omega + d \omega \bigr) \\
    &= \mu \bigl( \sum \alpha_i \frac{dC_i}{C_i} \wedge \omega + d \omega \bigr) \\
    &= \mu \bigl( \sum \alpha_i K_i+ d \omega \bigr) \\
    &= 0
\end{align*}
consequently there exists an $F$ with $dF = \mu \omega$.

For the second claim we observe that
$$
    dF = \mu \omega \iff dF \wedge \omega = 0.
$$
We now compute
\begin{align*}
    dF \wedge \omega
    &=  F \Bigl(\frac{d F}{F} \wedge \omega \Bigr)\\
    &=  F (d \log F \wedge \omega)\\
    & = F \sum \alpha_i d\log C_i \wedge \omega \\
    & = F \sum \alpha_i K_i\\
    &= 0
\end{align*}
\end{proof}

Sometimes the following trivial observation is useful:

\begin{lem} \label{lUnionOfIntegralCurves}
Let $C, D$ be plane algebraic curves without common components and $\omega$ a differential form.
Then $C$ and $D$ are integral curves of $\omega$ if and only if $C\cup D$ is an integral curve of $\omega$. 

Furthermore if $K_C$, $K_D$ and $K_{C\cup D}$ are the respective cofactors, we have
\[
K_C+K_D = K_{C\cup D}.
\]
\end{lem}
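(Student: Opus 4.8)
The plan is to reduce everything to the Leibniz rule $d(CD) = D\,dC + C\,dD$ together with the observation that, in the plane, every $2$-form is a polynomial multiple of $dx \wedge dy$, so that being an integral curve is equivalent to a single polynomial divisibility. I expect the ``no common components'' hypothesis to enter exactly once, through the coprimality $\gcd(C,D)=1$, and this will be the only place where anything beyond formal bookkeeping is needed. Note that the union $C \cup D$ is represented by the product polynomial $CD$, consistently with the abuse of notation in the definitions above.

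First I would dispose of the forward implication and the cofactor formula simultaneously, since these come essentially for free. Assuming $dC \wedge \omega = C K_C$ and $dD \wedge \omega = D K_D$, I would compute
\[
d(CD)\wedge\omega = (D\,dC + C\,dD)\wedge\omega = D(dC\wedge\omega) + C(dD\wedge\omega) = DC K_C + CD K_D = CD(K_C+K_D),
\]
which shows at once that $C\cup D$ is an integral curve of $\omega$ with cofactor $K_{C\cup D} = K_C + K_D$.

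For the converse I would pass to scalar coefficients. Writing $dC\wedge\omega = a\,dx\wedge dy$, $dD\wedge\omega = b\,dx\wedge dy$ and $d(CD)\wedge\omega = c\,dx\wedge dy$ with $a,b,c\in\CC[x,y]$, the same Leibniz expansion gives $c = Da + Cb$. The hypothesis that $C\cup D$ is an integral curve says precisely $CD \mid c$. In particular $C\mid c = Da + Cb$, and since $C\mid Cb$ this forces $C\mid Da$; as $C$ and $D$ share no common component, $\gcd(C,D)=1$, whence $C\mid a$. Symmetrically, $D\mid c$ together with $D\mid Da$ yields $D\mid Cb$ and then $D\mid b$. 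Setting $K_C = (a/C)\,dx\wedge dy$ and $K_D = (b/D)\,dx\wedge dy$ then exhibits $C$ and $D$ as integral curves, and the cofactor identity follows by reading the forward computation backwards.

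The only genuine obstacle is the coprimality step in the converse: without the ``no common components'' assumption one cannot pass from $C\mid Da$ to $C\mid a$, and the equivalence breaks down. Everything else is the Leibniz rule and the elementary fact that a planar $2$-form is determined by its $dx\wedge dy$ coefficient, so the argument is short once this single point is isolated.
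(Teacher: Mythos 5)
Your proof is correct, and for the converse direction it takes a genuinely different route from the paper. The forward implication and the cofactor identity coincide with the paper's argument: both rest on the Leibniz computation $d(CD)\wedge\omega = (D\,dC + C\,dD)\wedge\omega = CD(K_C+K_D)$. For the converse, however, the paper argues geometrically: away from the intersection points $C\cap D$, the integral-curve condition for the union is locally equivalent to the condition for each component separately, and one then invokes closedness of the integral-curve condition to extend the equivalence over the finitely many intersection points. Your argument instead stays entirely in the polynomial ring: writing $c = Da + Cb$ for the $\dx\wedge\dy$ coefficients, you extract $C\mid a$ and $D\mid b$ from $CD \mid c$ using unique factorization and $\gcd(C,D)=1$. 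This is arguably tighter than the paper's version --- the closedness step there is stated rather than verified, whereas your divisibility chain is self-contained and makes the role of the ``no common components'' hypothesis fully explicit (as you correctly note, it is used exactly once, to pass from $C\mid Da$ to $C\mid a$; one should just observe that since no irreducible factor of $C$ divides $D$, the full multiplicity of each such factor transfers to $a$, which is where the UFD property of $\CC[x,y]$ enters). What the paper's approach buys in exchange is geometric flexibility: the local-plus-closedness argument does not depend on the global ring being a UFD and so adapts more readily to analytic or local settings, while your argument is the cleaner and more elementary one in the polynomial context at hand.
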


\begin{proof} Locally, outside the intersection
points of $C$ and $D$, the integral curve condition holds for both curves individually if and only it holds for the union. Since the integral curve condition is a closed condition this equivalence must also hold globally. 

For the cofactors we have
\begin{align*}
	CDK_{C\cup D}  
	&=  d(CD) \wedge \omega \\
	&= \bigl((dC)D+CdD\bigr) \wedge \omega \\
	& = CK_CD + CDK_D  \\
	& = CD(K_C+K_D)
\end{align*}
\end{proof}

\section{The inverse problem}
\nosubsections

For a given configuration of integral curves $\{C_1,\dots,C_n\}$ one can consider the so called {\sl inverse problem} of finding all differential forms $\omega$ that have this curve configuration as integral curve. The inverse problem was solved in \cite{inverseProblemWalcherSuggestion}. Here we recall a version of this solution that takes into account the singularities of the curve configuration from \cite{newSolutions}.

\begin{defn}
Let $C_1,\dots,C_r \in \CC[x,y,z]$ be homogeneous polynomials.
Then
\[
     \begin{pmatrix}
    C_{1x} & C_{1y} & C_1 &  &   \\
    \vdots & \vdots & & \ddots & \\
    C_{rx} & C_{ry} & & & C_r
    \end{pmatrix}
\]
is called the {\sl Darboux Matrix} of the configuration $C_1,\dots,C_r$.
\end{defn}

\begin{prop} \label{pDarbouxMatrix}
Let $C_1,\dots,C_r$ be a configuration of plane curves. A differential form $\omega = P\dx + Q\dy$ has integral curves $C_i$ with cofactors $K_i \dx\dy$ if and only if
\[
    M \cdot (Q,-P, -K_1,\dots,-K_r)^t = 0
\]
where $M$ is the Darboux matrix of $C_1,\dots,C_r$.
\end{prop}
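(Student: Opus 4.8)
The plan is to verify the equivalence one curve at a time by reading off the individual rows of the matrix equation. First I would make the block structure of the Darboux matrix $M$ explicit: its $i$-th row is $(C_{ix},\,C_{iy},\,0,\dots,0,\,C_i,\,0,\dots,0)$, where the single diagonal entry $C_i$ sits in column $i+2$. Pairing this row against the vector $(Q,-P,-K_1,\dots,-K_r)^t$ annihilates every diagonal entry except the $i$-th, so the $i$-th component of $M\cdot(Q,-P,-K_1,\dots,-K_r)^t$ is simply $C_{ix}Q - C_{iy}P - C_iK_i$. Hence the single matrix equation $M\cdot(Q,-P,-K_1,\dots,-K_r)^t = 0$ is equivalent to the system of $r$ scalar identities
\[ C_{ix}Q - C_{iy}P = C_iK_i, \qquad i=1,\dots,r. \]

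Next I would identify each scalar identity with the integral-curve condition for the corresponding $C_i$. Computing the wedge product directly from $dC_i = C_{ix}\dx + C_{iy}\dy$ and $\omega = P\dx + Q\dy$, the terms $\dx\wedge\dx$ and $\dy\wedge\dy$ vanish, and what remains is
\[ dC_i \wedge \omega = (C_{ix}Q - C_{iy}P)\,\dx\wedge\dy. \]
By the definition of an algebraic integral curve, $C_i$ is an integral curve of $\omega$ with cofactor $K_i\,\dx\,\dy$ precisely when $dC_i\wedge\omega = C_i\cdot K_i\,\dx\wedge\dy$; comparing the coefficients of $\dx\wedge\dy$, this reads exactly $C_{ix}Q - C_{iy}P = C_iK_i$, the $i$-th scalar identity above.

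Finally I would assemble the two steps: the matrix equation $M\cdot(Q,-P,-K_1,\dots,-K_r)^t = 0$ holds if and only if all $r$ scalar identities hold, which in turn holds if and only if each $C_i$ is an integral curve of $\omega$ with cofactor $K_i\,\dx\,\dy$. This is the asserted equivalence.

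Since the argument is a direct unwinding of the definitions, I do not expect a genuine obstacle; the only points requiring care are bookkeeping ones. I would watch the sign conventions forced by the $-P$ and $-K_i$ entries of the vector, keep straight the identification between the polynomial cofactor $K_i$ and the $2$-form $K_i\,\dx\,\dy$, and—most importantly—exploit the diagonal block of $M$, which is exactly what guarantees that the $i$-th equation couples $C_i$ to its own cofactor $K_i$ alone and to no other $K_j$.
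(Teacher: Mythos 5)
Your proposal is correct and follows essentially the same route as the paper: expand $dC_i\wedge\omega$ to obtain the scalar identities $C_{ix}Q - C_{iy}P = C_iK_i$, then observe that the diagonal block of the Darboux matrix packages exactly these $r$ equations into the stated matrix equation. Your version merely spells out the row-by-row bookkeeping that the paper leaves implicit in the phrase ``writing these equations in matrix form.''
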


\begin{proof}
The definition of integral curve gives
\begin{align*}
     0 &= dC_i \wedge \omega - C_iK_i \dx\dy \\
       &= (C_{ix}\dx + C_{iy}\dy) \wedge (P\dx + Q\dy) - C_iK_i \dx\dy \\
       &= (C_{ix}Q - C_{iy}P-C_iK_i) \dx\dy.
\end{align*}
Writing these equations in matrix form gives the claimed identity.
\end{proof}

\begin{rem}
We can consider $P,Q,C_i$ and $K_i$ either as affine or as homogeneous polynomials in Proposition \ref{pDarbouxMatrix}. Notice that the condition of the proposition is compatible with homogenization and dehomogenization. Observing that
\[
    d\omega = d(P\dx + Q\dy) = (Q_x-P_y) \dx \wedge \dy
\]
we see that also the conditions of Darboux' Theorem are conditions between $P,Q,C_i$ and $K_i$ which make sense for affine and projective polynomials. These conditions are again compatible with homogenization and dehomogenization.

Notice also that we only homogenize the coefficients of $\dx$, $\dy$ and $\dx \wedge \dy$. This is therefore slightly different from pulling back differential forms from $\CC^2$ to $\PP^2$ where one would for example pull back $\dx$ to
\[
    d\left(\frac{x}{z}\right) = \frac{z\dx -x \dz}{z^2}.
\]
\end{rem}

We now restrict to the case of one integral curve. For the inverse problem this is not a restriction, since, by Lemma \ref{lUnionOfIntegralCurves}, a reduced (possibly reducible) curve $C = C_1\dots C_r$ is an integral curve of a differential form if and only if all its irreducible factors are.

\begin{defn}
Let $C \in \CC[x,y,z]$ be a homogeneous polynomial with no multiple factors and $M_C = (C_x,C_y,C)$ its Darboux matrix. Let
\[
    \sV_C(d) := (\ker M_C)_d
\]
denote the space of degree-$d$ differential forms that admit $C$ as an integral curve.

The vector $H_C := (C_y,-C_x,0)^t$ is always in the kernel of $M_C$ and represents the Hamiltonian vector field associated to $C$. Let
\[
    \sV^H_C(d) := (H_C)_d
\]
be the {\sl vector space of trivial differential forms}. Its elements are of the form $FH_C$, where $F \in \CC[x,y,z]$ is homogeneous of degree $d-e+1$.
\end{defn}

We now turn to singularity theory

\begin{defn}
Let $C \in \CC[x,y,z]$ be a homogeneous polynomial with no multiple factors and no components at infinity, and $P$ a point on the curve defined by $C$. We denote the Tjurina number of $C$ in $P$ by $t(P)$. If $P$ is on the line $z=0$, and $i$ is the intersection number of $z=0$ with $C$ we call
\[
    t_z(P) := t(P) + i - 1
\]
the  {\sl modified Tjurina number}.  
\end{defn}

\begin{example} \label{eSimpleSingularities} The so-called {\sl simple singularities} are listed in Table \ref{tSimple}. The simple singularities are all quasi-homogeneous and therefore the Tjuringa number $t$ is equal to the Milnor number $m$. Furthermore,
for simple singularities, the maximum
number of conditions $c$ a polynomial must satisfy in order to have a singularity of this type is also equal to the Tjuringa number.
\end{example}

\begin{table}
\begin{center}
\begin{tabular}{|c|c|c|c}
\hline
name & local equation & $m=t=c$ \\
\hline
node ($A_1$)        & $x^2-y^2$ & 1  \\
cusp  ($A_2$)        & $x^2-y^3$ & 2  \\
tacnode ($A_3$)    & $x^2-y^4$ & 3  \\
$A_n$             & $x^2-y^{n+1}$ & n  \\
triple point ($D_4$) & $x^3-y^3$ & 4  \\
$D_n$            & $y(x^2-y^{n-2})$ & n  \\
$E_6$            & $x^3-y^4$ & 6 \\
$E_7$            & $x(x^2-y^3)$ & 7  \\
$E_8$            & $x^3-y^5$ & 8  \\
\hline
\end{tabular}
\caption{The simple singularities} \label{tSimple}
\end{center}
\end{table}

\newcommand{\expectedDim}{\delta}

\begin{thm}  \label{pExpected}
Let $C \in \CC[x,y,z]$ be a homogeneous polynomial of degree $e$ with no multiple factors and no components at infinity, let $M_C = (C_x,C_y,C)$ be the Darboux matrix, and $X \subset \PP^2$ the scheme defined by the vanishing of $M_C$. Then
\[
    \dim \sV_C(d) \ge {d-e+3 \choose 2} + {d+1 \choose 2} - (e-1)^2 + \deg X =: \expectedDim
\]
where the first summand is $0$ if $e > d+1$.
We call $\expectedDim$ the {\sl expected dimension} of differential forms with integral curve $C$.

Let $C_{\sing}$ be the set of singular points of $C$ outside of the line at infinity, and $C_\infty$ the set of points of $C$ that lie on the line at infinity. If the singularities of $C \cup \{z=0\}$ at the points in $C_\infty$ are all quasi-homogeneous, then
\[
    \deg X = \sum_{P \in C_{\sing}} t(P) + \sum_{P \in C_\infty} t_z(P)
\]
where $t(P)$, $t_z(P)$ denote the Tjurina and modified Tjurina number of the curve $C$ at $P$.
\end{thm}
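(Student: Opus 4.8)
The plan is to treat the two assertions separately: the dimension bound is a rank count for a single linear map, whereas the formula for $\deg X$ is a sum of purely local computations, the one at infinity being the new ingredient.

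\textbf{The dimension bound.} By Proposition~\ref{pDarbouxMatrix}, $\sV_C(d)$ is the kernel of the linear map
\[
    \mu\colon R_d\oplus R_d\oplus R_{d-1}\longrightarrow R_{d+e-1},\qquad (Q,P,K)\longmapsto C_xQ-C_yP-CK,
\]
where $R=\CC[x,y,z]$. Since the entries $C_x,C_y,C$ have degrees $e-1,e-1,e$, the image of $\mu$ is exactly the degree-$(d+e-1)$ piece of the ideal $I=(C_x,C_y,C)$, whose saturation cuts out $X$. First I would apply rank–nullity together with $\dim I_{d+e-1}=\binom{d+e+1}{2}-\dim(R/I)_{d+e-1}$ to get
\[
    \dim\sV_C(d)=2\binom{d+2}{2}+\binom{d+1}{2}-\binom{d+e+1}{2}+\dim(R/I)_{d+e-1}.
\]
A short binomial identity gives $2\binom{d+2}{2}-\binom{d+e+1}{2}=\binom{d-e+3}{2}-(e-1)^2$, so the claimed bound $\dim\sV_C(d)\ge\expectedDim$ becomes the single inequality $\dim(R/I)_{d+e-1}\ge\deg X$. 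To close this, I would use that $C$ is reduced with isolated singularities, so the polars $C_x,C_y$ share no common factor and $B:=R/(C_x,C_y)$ is Cohen–Macaulay of dimension one, while $R/I=B/CB$ is the cone over the zero-dimensional scheme $X$. Its Hilbert polynomial is the constant $\deg X$, and the inequality says the Hilbert function does not drop below this value in degree $d+e-1$; I would deduce this from the Cohen–Macaulay structure of $B$ (equivalently, via the kernel sheaf $\sF$ of $M_C$, from $h^1(\sI_X(d+e-1))\le\dim(I^{\mathrm{sat}}/I)_{d+e-1}$).

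\textbf{The formula for $\deg X$.} Here $\deg X=\sum_P\operatorname{length}_P\sO_X$, the sum over the finitely many points where $C_x=C_y=C=0$. At an affine point Euler's relation $xC_x+yC_y+zC_z=eC$ forces $C_z$ to vanish too, so in the chart $z=1$ the local ideal is $(f,f_x,f_y)$ with $f=C(x,y,1)$, of colength the Tjurina number $t(P)$; this gives the affine sum $\sum_{P\in C_{\sing}}t(P)$ with no quasi-homogeneity needed. The work is at $P\in C_\infty$. In an affine chart where $v$ is a local equation of $L_\infty$ and $c$ a local equation of $C$, Euler's relation rewrites the localized ideal $(C_x,C_y,C)$ as $(c,c_u,v\,c_v)$. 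Assuming $v\,c$ (the local equation of $C\cup L_\infty$) is quasi-homogeneous, the weighted-Euler identity for $v\,c$ descends to one for $c$, which shows both that $c\in(c_u,v\,c_v)$, so the ideal collapses to $(c_u,v\,c_v)$, and that $t(P)$ equals the Milnor number $\mu=\dim\CC\{u,v\}/(c_u,c_v)$. Since $c_u,c_v$ is a regular sequence, $c_v$ is a nonzerodivisor modulo $c_u$, so $(c_u:c_v)=(c_u)$ and
\[
    \dim\frac{\CC\{u,v\}}{(c_u,v\,c_v)}=\dim\frac{\CC\{u,v\}}{(c_u,c_v)}+\dim\frac{\CC\{u,v\}}{(c_u,v)}=\mu+\operatorname{ord}_u c_u(u,0).
\]
As $\operatorname{ord}_u c(u,0)=i$ by definition of the intersection number, $\operatorname{ord}_u c_u(u,0)=i-1$, and the local length is $t(P)+i-1=t_z(P)$, as claimed.

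\textbf{Main obstacle.} The technical heart is the local identity at infinity: replacing $c_v$ by $v\,c_v$ in the Tjurina ideal raises the colength by exactly $i-1$. This is precisely where quasi-homogeneity is indispensable—it is what collapses $(c,c_u,v\,c_v)$ to $(c_u,v\,c_v)$ and identifies $t(P)$ with $\mu$—and without it the comparison with the genuine Tjurina ideal need not be governed by the intersection number alone. For the dimension bound the only delicate point is the monotonicity of the Hilbert function of $R/I$ below its stable range, which the Cohen–Macaulay structure of the polar complete intersection is designed to control.
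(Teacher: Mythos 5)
Your second half (the formula for $\deg X$) is essentially sound, and is in the spirit of the argument the paper relies on (note the paper gives no internal proof, deferring entirely to \cite[Prop.~4.4]{newSolutions}): the localization $(C_x,C_y,C)_P=(c,c_u,v\,c_v)$ via Euler, the collapse to $(c_u,v\,c_v)$ under quasi-homogeneity, the additivity of local lengths, and $\operatorname{ord}_u c_u(u,0)=i-1$ are all correct. The one point you should make explicit is that the quasi-homogeneous coordinates for the germ of $C\cup\{z=0\}$ can be chosen so that the smooth branch $L_\infty$ remains a coordinate axis; this is what lets the weighted Euler relation for $v\,c$ ``descend'' to $c$, and you assert it without proof.

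The genuine gap is in the dimension bound. Your rank--nullity reduction to the single inequality $\dim(R/I)_{d+e-1}\ge\deg X$ silently discards the theorem's convention that the first summand is set to $0$ when $e>d+1$. For $e\le d+3$ the binomial vanishes anyway and your reduction is exact; but for $e\ge d+4$ the polynomial value $\binom{d-e+3}{2}=\binom{e-d-2}{2}$ is strictly positive, so you are attempting to prove a strictly \emph{stronger} inequality --- and that stronger inequality is false. Counterexample: let $C=f(x,y)$ be a cone over $e$ general points ($e$ concurrent lines through $(0\!:\!0\!:\!1)$, transverse to $z=0$). Then $R/I\cong\bigl(\CC[x,y]/(f_x,f_y)\bigr)[z]$, so $h_{R/I}(m)=\sum_{j\le m}h_A(j)$ with $A=\CC[x,y]/(f_x,f_y)$ Artinian of length $(e-1)^2=\deg X$ and socle degree $2e-4$; for $d\le e-4$ one gets $h_{R/I}(d+e-1)<\deg X$, yet the stated bound (with the $0$ convention) holds, in fact with equality. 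Hence no Cohen--Macaulay monotonicity of the kind you invoke can be true, and your parenthetical variant $h^1(\sI_X(d+e-1))\le\dim(I^{\mathrm{sat}}/I)_{d+e-1}$ fails in the same example. The repair is to bound
\[
  \dim I_{d+e-1}\;\le\;\dim(I^{\mathrm{sat}})_{d+e-1}\;=\;\tbinom{d+e+1}{2}-\deg X+h^1\bigl(\sI_X(d+e-1)\bigr),
\]
and then $h^1(\sI_X(d+e-1))\le h^1(\sI_Z(d+e-1))$ for $X\subseteq Z:=V(C_x,C_y)$ (the quotient $\sI_X/\sI_Z$ is a skyscraper, so $H^1$ surjects), where the Koszul resolution of the $(e-1,e-1)$ complete intersection $Z$ gives $h^1(\sI_Z(d+e-1))=h^0(\sO_{\PP^2}(e-d-4))=\binom{e-d-2}{2}$ --- exactly the slack the convention allows, vanishing precisely when $e\le d+3$. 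Finally, your claim that the polars share no common factor because $C$ is reduced is false as stated: $C=x(x-z)$ is reduced with no component at infinity, yet $C_y\equiv 0$; the case where $C_x,C_y$ is not a regular sequence needs separate treatment before any argument through the polar complete intersection can be run.
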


\begin{proof}
\cite[Prop.~4.4]{newSolutions}.
\end{proof}

\begin{example}[plane sextics] \label{ePlaneSextics}
Let $C$ be plane sextic curve. We want to find degree $3$ differential forms $\omega$ having $C$ as integral curve. Since $6 > 3+1$ the expected number of such differential forms is
\[
    \expectedDim = {3+1 \choose 2} - (6-1)^2 + \deg X =  \deg X -19
\]
So we need at least $\deg X=20$ ensure the existence of $\omega$.

The maximum Milnor number of a plane sextic with simple singularities is $19$. Such sextics are called {\sl maximal}. To
obtain $\deg X = 20$ we need in addition that $C$ has at least one intersection of mulitplicity $2$ with the line at infinity. Unfortunately in this situation all zeros of $\omega$ that lie either on $C$ or on line at infinity, see Example  \ref{eMaximalInfinity}.

If $C$ is a submaximal sextic, i.e. one with total Milnor number $18$ we need two intersections of multiplicity $2$ (or one of multiplicity $3$) to obtain $\delta>0$. In this situation $\omega$ has generically one zero outside of $C$ and the line at infinity.
The examples of this paper are all of this type.

Since the intersection multiplicity of $C$ with the line at infinity is $6$, the contribution to $\deg X$ from infinity is at most $5$. Hence the smallest possible Milnor number of $C$ with $\delta>0$ is $15$. Plane sextics with simple singularities have been classified; see \cite{UrabeSextics,YangSextics}. These classifications contain hundreds of cases, providing a rich source of examples for interesting cubic differential forms.
\end{example}

\section{an analytic invariant} \label{sAnalytic}
\nosubsections

In \cite{newSolutions} we introduced an analytic invariant $\eta$ attached to a triple $(\omega,\{C_1,\dots,C_r\},a)$ where $\omega$ a differential form in the plane, $C_1,\dots,C_r$ are algebraic integral curves of $\omega$ an $a$ is any point in the plane:

\begin{defn} \label{dEta}
Let $\omega$ be a differential form with algebraic integral curves $C_1,\dots,C_r$, and cofactors $K_1,\dots,K_r$ and $a$ a zero of $\omega$. Then we define
\[
    \eta := \eta(\omega,\{C_1,\dots,C_r\},a) := \bigl(K_1(a) : \dots : K_r(a) : d\omega(a) \bigr)
\]
where we interpret the right-hand side as a ratio i.e
\[
(u_1:\dots:u_{n+1}) = (\lambda u_1 :\dots : \lambda u_{n+1}) \quad \text{for $\lambda \not =0.$}
\]
The degenerate case $(0:\dots:0)$ is also allowed.
\end{defn}

While $\eta$ is defined everywhere, it carries the most information if $a$ is a zero of $\omega$.
The main property of $\eta$ is, that it depends in many situations only on the singularities of the curve configuration, but not on $\omega$. To make this precise we need the following:

\begin{defn}
Let $C$ be a algebraic curve and $a \in C$ a point. We say that $C$ has a quasi-homogeneous singularity at $a$ if there exists local analytic coordinates $x_a,y_a$ around $a$ and a weighted grading $\wdeg$ on $\CC[x_a,y_a]$ such that the local equation $C_a$ of $C$ at $a$ is quasi-homogeneous with respect to $\wdeg$.
\end{defn}

\begin{example}
The curve $C = x^2 - y^3$ has a quasi-homogeneous singularity at $a=(0,0)$, since for $x_a=x$ and $y_a=y$ the equation local equation $C_a = C$ is quasi-homogeneous with respect to the grading given by $\wdeg x_a = 3$ and $\wdeg y_a = 2$. Indeed all monomials of $C_a$ have degree $6$ in this grading
\end{example}

\begin{defn}
Let $C = C_1\cup\cdots\cup C_r$ be a plane curve. A point $P \in C$ is called {\sl $\eta$-geometric} if
\begin{enumerate}
\item $C$ has a quasi-homogeneous singularity at $P$.
\item $P$ is {\sl not} a point where exactly two distinct curves of $\{C_1,\dots,C_r\}$ intersect transversally.
\end{enumerate}
\end{defn}

\begin{thm}\label{tEtaQuasiHom}
Let $C=C_1\cup\cdots\cup C_r$ be an algebraic integral curve for a differential form $\omega$, and let $a$ be an $\eta$–geometric point of $C$ not lying on the line at infinity. Then either
\[
  \eta=(\wdeg C_1:\cdots:\wdeg C_r:\wdeg x_a+\wdeg y_a),
\]
or $\eta=(0:\cdots:0)$.
\end{thm}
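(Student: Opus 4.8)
The plan is to pass to local analytic coordinates at $a$ and rewrite $\eta$ entirely through the vector field dual to $\omega$. Associated to $\omega=P\dx+Q\dy$ is the field $X$ with $\omega=\iota_X(\dx\wedge\dy)$; the integral–curve condition of Proposition~\ref{pDarbouxMatrix} then reads $X(C_i)=K_iC_i$, and $d\omega=(\divergence X)\,\dx\wedge\dy$. First I would note that at an $\eta$–geometric point $C$ is genuinely singular, so a nonvanishing $\omega$ would make the foliation nonsingular and force $C$ to be smooth at $a$; hence $\omega(a)=0$, i.e.\ $X(a)=0$. Fix local coordinates $x_a,y_a$ in which the local equation $f=\prod f_i$ of $C$ is quasi-homogeneous of weighted degree $w=\sum w_i$, with $p=\wdeg x_a$, $q=\wdeg y_a$ and $w_i=\wdeg C_i$ (so $w_i=0$ for components missing $a$). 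Passing from $\dx\wedge\dy$ to $\dx_a\wedge\dy_a$ multiplies every entry $K_i(a)$ and $d\omega(a)$ by the same nonzero Jacobian, so it does not affect the ratio. Because $X(a)=0$, the unit ambiguity $C_i=u_if_i$ is harmless ($X(u_i)(a)=0$), a component $C_j$ with $a\notin C_j$ satisfies $K_j(a)=0=w_j$, and it suffices to compute, for the local area form, the local cofactors $\kappa_i$ defined by $X(f_i)=\kappa_if_i$ together with $\divergence X$ at $a$.

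The key step is Saito's structure theorem for logarithmic fields of a quasi-homogeneous plane curve. Since $X(f)\in(f)$, the field $X$ is tangent to $\{f=0\}$, hence to each branch, so $X(f_i)=\kappa_if_i$; and for quasi-homogeneous $f$ the module $\mathrm{Der}(-\log f)$ is free with basis the Euler field $E=p\,x_a\partial_{x_a}+q\,y_a\partial_{y_a}$ and the Hamiltonian field $H=f_{y_a}\partial_{x_a}-f_{x_a}\partial_{y_a}$, because the determinant of the coefficient matrix is $-w f$, a unit times $f$. Writing $X=gE+hH$ with $g,h$ analytic, I would compute the two invariants separately. Euler's identity gives $E(f_i)=w_if_i$ and $\divergence E=p+q$, while $\divergence H=f_{x_ay_a}-f_{y_ax_a}=0$ and $H(f_i)=c_if_i$ with $c_i$ weighted-homogeneous of weighted degree $w-p-q$. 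Since $E(a)=H(a)=0$, evaluation at $a$ kills the terms $E(g)$ and $H(h)$ in $\divergence(gE+hH)=g(p+q)+E(g)+H(h)$, leaving
\[
\kappa_i(a)=g(a)\,w_i+h(a)\,c_i(a),\qquad \divergence X(a)=g(a)\,(p+q).
\]

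The only remaining point, and the one I expect to be the real obstacle, is to show $c_i(a)=0$; this is exactly where the $\eta$–geometric hypothesis is used. As $c_i$ is weighted-homogeneous of degree $w-p-q$, it can carry a nonzero constant term only when $w=p+q$, which for a quasi-homogeneous singularity occurs precisely at a node. If $a$ is not a node then $w-p-q>0$ and $c_i(a)=0$ automatically. If $a$ is a node, condition~(2) in the definition of $\eta$–geometric forbids its two branches from lying on two distinct components, so both lie on one $C_i$; then locally $f=f_i$ and $H(f_i)=H(f)=0$, giving $c_i=0$. In either case $c_i(a)=0$, and therefore
\[
\eta=(\kappa_1(a):\cdots:\kappa_r(a):\divergence X(a))=g(a)\,(w_1:\cdots:w_r:p+q).
\]
If $g(a)\neq0$ this is the asserted ratio $(\wdeg C_1:\cdots:\wdeg C_r:\wdeg x_a+\wdeg y_a)$, and if $g(a)=0$ it degenerates to $(0:\cdots:0)$. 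The transversal-node computation $f=x_ay_a$, where $H(x_a)=x_a$ forces $c_1(a)=1\neq0$, shows that condition~(2) cannot be dropped.
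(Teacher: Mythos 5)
Your argument is correct, and it takes a genuinely different route from the paper. The paper's own proof is a reduction by citation and case analysis: the one-component case invokes \cite[Prop.~5.3]{newSolutions}, the two-component case invokes \cite[Prop.~5.7]{newSolutions} under the extra hypothesis $\wdeg C > \wdeg x_a + \wdeg y_a$ (which fails exactly at $A_1$, ruled out by the $\eta$-geometric assumption), and the case of three or more components is left as a ``straightforward adaptation.'' You instead give a single self-contained argument covering all cases at once: after translating to the dual vector field $X$ with $X(C_i)=K_iC_i$ and $d\omega=(\divergence X)\,\dx\wedge\dy$, you use Saito's criterion (the coefficient determinant of $E$ and $H$ being $-wf$, a unit times $f$) to decompose $X=gE+hH$, and the theorem falls out of $\kappa_i(a)=g(a)w_i+h(a)c_i(a)$ and $\divergence X(a)=g(a)(p+q)$ together with $c_i(a)=0$. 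This uniformity buys three things the paper's proof does not make visible: any number of components is handled by the same computation, the degenerate case $\eta=(0:\cdots:0)$ is identified as exactly $g(a)=0$, and the role of the $\eta$-geometric hypothesis is isolated in the single weighted-degree count $\wdeg c_i = w-p-q$, with the transverse node as the unique obstruction --- precisely matching the paper's remark that the extra hypothesis of \cite[Prop.~5.7]{newSolutions} fails only at $A_1$. Three small points deserve a sentence each in a polished write-up: (i) the identity $E(f_i)=w_if_i$ uses that each analytic irreducible factor of a quasi-homogeneous $f$ is itself quasi-homogeneous for the same weights (true, since the branches are permuted, hence fixed, by the connected $\CC^*$-action $(x_a,y_a)\mapsto(t^px_a,t^qy_a)$, or by the explicit factorization into $x_a$, $y_a$, and factors $x_a^{q_0}-\lambda y_a^{p_0}$); (ii) your preliminary facts $\omega(a)=0$ and $H(a)=0$ rest on reading ``quasi-homogeneous singularity'' as a genuine singular point of $C$, so that $f_{x_a}(a)=f_{y_a}(a)=0$ --- this is the intended reading, and the paper itself asserts $\omega(a)=0$ at such points; and (iii) the field dual to $\omega$ with respect to $\dx_a\wedge\dy_a$ is $JX$ with $J$ the Jacobian unit, not $X$ itself, but since $X(a)=0$ this rescales all entries of $\eta$ by $J(a)$ simultaneously, as your invariance remark correctly anticipates. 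Your closing computation for $f=x_ay_a$, where $c_1(a)=1\neq0$, is a worthwhile addition: it shows condition (2) of the $\eta$-geometric definition is sharp rather than an artifact of the method.
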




\begin{proof}
The case in which $a$ lies on a single component $C_i$ is covered by \cite[Prop.~5.3]{newSolutions}.
If $a$ lies on exactly two components, \cite[Prop.~5.7]{newSolutions} applies under the additional hypothesis
\[
  \wdeg C \;>\; \wdeg x_a + \wdeg y_a .
\]
This extra hypothesis fails only at an $A_1$ singularity; however, by our assumption that $a$ is $\eta$–geometric, this case does not occur.
The general case, where $a$ lies on at least three components, follows by a straightforward adaptation of the argument in \cite[Prop.~5.7]{newSolutions}.
\end{proof}

\newcommand{\pfrac}[2]{( #1 : #2)}
\newcommand{\bigpfrac}[2]{\bigl( #1 : #2\bigr)}
\newcommand{\sfrac}[2]{\frac{#2}{#1}}

\begin{example}
For simple singularities we obtain the values in Table~\ref{tEtaSimple} when all analytic branches lie on distinct components.
If several branches lie on the same component, we add the corresponding entries (i.e.\ sum the branch weights in that coordinate).
For example, three smooth components meeting in a triple point yield $\eta=(1\!:\!1\!:\!1\!:\!2)$, whereas a single component with a triple point yields $\eta=(3\!:\!2)$.

\begin{table} 
\begin{tabular}{|c|c|c|c|c|c}
\hline
Type & Equation & $\eta$, if not (0:0) \\
\hline
node ($A_1$)        & $x^2-y^2$ & 1:1  \\
cusp  ($A_2$)        & $x^2-y^3$ & 6:5  \\
tacnode ($A_3$)    & $(x-y^2)(x+y^2)$ & 2:2:3 \\
$A_n$, $n$ even    & $x^2-y^{n+1}$ & (2n+2):(n+3)  \\
$A_n$, $n$ odd    & $(x-y^{\frac{n+1}{2}})(x+y^{\frac{n+1}{2}})$ & (n+1):(n+1):(n+3)  \\
triple point ($D_4$) & $y(x-y)(x+y)$ & 1:1:1:2  \\
$D_n$, $n$ even    & $y(x^2-y^{\frac{n}{2}-1})(x^2+y^{\frac{n}{2}-1})$ & 2:(n-2):(n-2):n \\
$D_n$, $n$ odd    & $y(x^2-y^{n-2})$ & 2:(2n-4):n \\
$E_6$            & $x^3-y^4$ & 12:7 \\
$E_7$            & $x(x^2-y^3)$ & 9:5   \\
$E_8$            & $x^3-y^5$ & 15:8  \\
\hline
\end{tabular}

\caption{$\eta$ for simple singularities}
\label{tEtaSimple}
\end{table}

\end{example}


\begin{example}
We now demonstrate how $\eta$ can be used in an concrete example to prove Darboux integrability using only the arrangement of singularities of an integral curve:

Let $C$ be the dual curve of a smooth cubic $E$. Since $E$ has $9$ inflection points, $C$ will be a sextic with $9$ cusps and hence total Tjuringa number $18$. Every line through $2$ cusps will be a bitangent and we can assume that one of these is the line at infinity. We are therefore in the situation of Example \ref{ePlaneSextics} and obtain a degree $3$ differential form $\omega$ which has $C$ as integral curve with some cofactor $K$.

At each of the $7$ cusps $a_1,\dots, a_7$ that do not lie at infinity we have $\eta = (6:5)$ or $(0:0)$.  This implies that $5K(a_i)-6d\omega(a_i) = 0$ for $i =1\dots 7$. Now either $5K-6d\omega$ vanishes identically or it defines a conic in the plane that passes through the $7$ cusps. The second case is not possible since then the intersection number of the conic and the sextic would be $2\cdot 7 = 14 > 2\cdot6$. It follows that $d\omega = \frac{5}{6} K$. But this is exactly the condition for the existence of a Darboux integrating factor.

Notice the following: If we start with an elliptic curve $E$ defined over the real numbers (or even the rationals), we obtain $C, \omega, d\omega $ and $K$ over the same field, but the coordinates of the cusps are in general defined only over $\CC$. Nevertheless the argument above still implies $d\omega = \frac{5}{6}K$ and hence $\omega$ still has a Darboux integrating factor. In other words, even singularities which are not defined over the real numbers, do help to proving the existence a Darboux integrating factor for a real differential form.
\end{example}

\section{computing the invariant at infinity}

There are two ways to define $\eta$ for a point $a$ at infinity: One is to use Definition \ref{dEta} locally at a chart around $a$ with respect to coordinates in that chart. The other is to consider $d\omega, C_i$ and $K_i$ in Definition \ref{dEta} as homogeneous polynomials and evaluate at the projective coordinates of $a$. Since $d\omega$ and the $K_i$ are of the same degree $\deg \omega -1$ and their ratio is well defined. While the first definition is easier to predict from the singularities of $C$, it is the second definition that fits best with our application to Darboux integrability.

Unfortunately the values one obtains from these two definition are not equal.
In this section we show how one can obtain one from the other.

We start with some notation: Consider the projective plane $\PP^2$ with coordinates $(X\colon Y \colon Z)$,
the chart $U_{Z\not=0}$ with coordinates $x := \frac{X}{Z}$ and $y:=\frac{Y}{Z}$, and the chart
$U_{X\not=0}$ with coordinates $y :=  \frac{Y}{X}$ and $z := \frac{Z}{X}$. The transformation map $\varphi$ from $U_{X\not=0}$ to $U_{Z\not=0}$, defined on the intersection, is then given by $\varphi(y,z) = \left(\frac{1}{z},\frac{y}{z}\right)$. We call $U_{X \not=0}$ the {\sl chart at infinity}.

\begin{defn} \label{dHomogenization} \label{dPrime}
Let $F$ be a polynomial differential $i$-form, $i=0,1,2$ on $U_{Z\not=0}$ with coefficients  of degree $d$. Then we denote its homogenization by $F^h$ i.e
\[
    F^h := F\left(\frac{X}{Z},\frac{Y}{Z}\right) \cdot \left\{
    \begin{matrix}
    Z^d &\text{if $F$ is a polynomial} \\
    Z^{d+2} &\text{if $F$ is a $1$-form} \\
    Z^{d+3} & \text{if $F$ is a $2$-from}.
    \end{matrix}
    \right.
\]
Dehomogenizing this with respect to $X$ wie obtain a polynomial differential $i$-form on the chart $U_{X\not=0}$ at infinity we denote this by
\[
    F' := F^h(1\colon y \colon z).
\]
Notice that this implies
\[
F' = \varphi^*(F) \cdot \left\{
    \begin{matrix}
    z^d &\text{if $F$ is a polynomial} \\
    z^{d+2} &\text{if $F$ is a $1$-form} \\
    z^{d+3} & \text{if $F$ is a $2$-from}.
    \end{matrix}
    \right.
\]
where $\varphi^*$ is the pull-back map associated to $\varphi$.
\end{defn}

\begin{example}
We have
\[
    (\dx)' = \varphi^*(\dx) z^2 = d\left(\frac{1}{z}\right) z^2 = -\frac{1}{z^2}\dz \cdot z^2 = -\dz
\]
similarly
\[
    (\dy)' = \varphi^*(\dy) z^2 = d\left(\frac{y}{z}\right) z^2 = \frac{z\dy-y\dz}{z^2} \cdot z^2 =     z\dy-y\dz
\]
\end{example}

We can now formally define the extention of our invariant to infinity:

\begin{defn} 
Let $\omega$ be a differential form with algebraic integral curve $C$ and cofactor $K_C$ and $a\in \PP^2$ a projective point. Then
\[
    \eta := \eta(\omega,C,a) := \bigpfrac{(K_C)^h(a)}{(d\omega)^h(a)}.
\]
Notice that for $a = (x:y:1)$ a projective point {\sl not} at infinity, this definition agrees with our previous one.
\end{defn}

\begin{rem}
$\eta$ is well defined. Indeed, since $C$ is an integral curve with cofactor $K$, we
have formula
\[
    dC \wedge \omega = CK
\]
and hence $\deg K = \deg \omega -1 = \deg d\omega$.

\end{rem}

\begin{rem}
Let $\omega$ be a differential form with algebraic integral curve $C$ and cofactor $K_C$ and $a=(1:y:z)$ a point in the chart at infinity. On the one hand, we have by definition
\[
    \eta (\omega,C,a) = \bigpfrac{(K_C)'(a)}{(d\omega)'(a)},
\]
while computing in the chart at infinity, would give
\[
    \eta(\omega',C',a) = \bigpfrac{K_{C'}(a)}{d(\omega')(a)},
\]
assuming that $C'$ is an algebraic integral curve of $\omega'$  with some cofactor $K_{C'}$ (which we prove below).
Unfortunately these values differ, since taking differentials does not commute with homogenization.
\end{rem}

Let us look in more detail at the geometry in the chart at infinity:

\begin{prop} \label{pLineAtInfinity}
Let $\omega$ be a differential form. Then the line at infinity defined by $z=0$ is an algebraic integral curve of $\omega'$ with cofactor
$$
    K_z := \frac{\dz \wedge \omega'}{z}.
$$
In particular this quotient is a polynomial 2-form.
\end{prop}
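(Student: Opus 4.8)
The plan is to unwind the definition of an integral curve: to say that $z=0$ is an integral curve of $\omega'$ with cofactor $K_z$ means exactly that $\dz\wedge\omega' = z\,K_z$ for some polynomial $2$-form $K_z$. Hence the assertion that $z=0$ is an integral curve with the stated cofactor and the assertion that $\frac{\dz\wedge\omega'}{z}$ is a polynomial $2$-form are one and the same claim: everything reduces to showing that $\dz\wedge\omega'$ is divisible by $z$.

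First I would compute $\omega'$ explicitly in the coordinates $(y,z)$ of the chart at infinity. Write $\omega = P\,\dx + Q\,\dy$ with $\deg P,\deg Q \le d$. By Definition~\ref{dPrime} we have $\omega' = \varphi^*(\omega)\cdot z^{d+2}$, and feeding in the example computations $\varphi^*(\dx) = -\dz/z^2$ and $\varphi^*(\dy) = (z\,\dy - y\,\dz)/z^2$ together with $\varphi^*(P)=P(1/z,y/z)$, $\varphi^*(Q)=Q(1/z,y/z)$ yields
\[
  \omega' = z^{d}\bigl(-P(1/z,y/z)\,\dz + Q(1/z,y/z)(z\,\dy - y\,\dz)\bigr).
\]
Setting $\tilde P := z^{d}P(1/z,y/z)$ and $\tilde Q := z^{d}Q(1/z,y/z)$, which are genuine polynomials in $y,z$ precisely because $\deg P,\deg Q\le d$, this reads $\omega' = z\tilde Q\,\dy - (\tilde P + y\tilde Q)\,\dz$.

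Now comes the decisive observation. Since $\dz\wedge\dz = 0$, the product $\dz\wedge\omega'$ sees only the $\dy$–component of $\omega'$, so
\[
  \dz\wedge\omega' = z\tilde Q\,(\dz\wedge\dy) = -z\,\tilde Q\;\dy\wedge\dz .
\]
The coefficient of $\dy$ in $\omega'$ carries an explicit factor $z$ because the only contribution to $\dy$ comes from $\varphi^*(\dy)$, whose $\dy$–part is $z\,\dy/z^2$, while $\varphi^*(\dx)$ contributes solely to $\dz$. Therefore $\frac{\dz\wedge\omega'}{z} = -\tilde Q\,\dy\wedge\dz$ is indeed a polynomial $2$-form, which establishes both halves of the statement with $K_z = -\tilde Q\,\dy\wedge\dz$.

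The computation is routine once it is set up; I expect no genuine obstacle beyond keeping the powers of $z$ straight. The one point meriting care is the degree bookkeeping that guarantees $z^{d}P(1/z,y/z)$ and $z^{d}Q(1/z,y/z)$ are honest polynomials rather than Laurent expressions — which is exactly where the convention $\deg\omega\le d$ built into Definition~\ref{dHomogenization} is used — and the observation that the (possibly nonvanishing) $\dz$–terms of $\omega'$ are annihilated by $\dz\wedge(-)$ and so cannot obstruct divisibility by $z$.
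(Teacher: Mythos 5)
Your proposal is correct and follows essentially the same route as the paper: both compute $\omega'$ in the chart at infinity (your $\tilde P,\tilde Q$ are exactly the paper's $P',Q'$), observe that $\dz\wedge\omega' = Q'z\,\dz\wedge\dy$ because the $\dz$-terms are annihilated and the $\dy$-coefficient carries an explicit factor of $z$, and then note that the integral-curve condition with cofactor $K_z$ is tautologically equivalent to this divisibility. Your extra remark on the degree bookkeeping ensuring $\tilde P,\tilde Q$ are honest polynomials is a welcome, if minor, point of added care.
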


\begin{proof}
Let $\omega = P \dx + Q \dy$. Then
\begin{align*}
    \dz \wedge \omega'
    &= \dz \wedge \bigl(P' (\dx)' + Q' (\dy)'\bigr) \\
    &= \dz \wedge \bigl(-P' \dz + Q' (z\dy -y\dz)\bigr) \\
    &= Q'z \dz \wedge \dy
\end{align*}
So indeed $\omega' \wedge \dz$ is divisible by $z$. We now check, that $z$ is an integral curve of $\omega'$ with cofactor $K_z$
\[
     \dz \wedge \omega'
    = z \frac{\dz \wedge \omega'}{z}
    = z K_z
\]
\end{proof}

\begin{prop} \label{pRelationPrime}
Let $\omega$ be a differential form with algebraic integral curve $C$  and cofactor $K_C$, then $C'$ is an algebraic integral curve of $\omega'$.

 If we denote the cofactor of $C'$ by $K_{C'}$, we have
the following formulas:
\begin{align*}
    (K_C)' &= K_{C'}  - (\deg C) K_z  \\
     (d\omega)' &= d(\omega') - (\deg \omega +2) K_z.
\end{align*}
\end{prop}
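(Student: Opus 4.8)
The plan is to push the affine integral–curve identity $dC\wedge\omega = CK_C$ through the prime operation and read off the cofactor $K_{C'}$, exploiting that $F\mapsto F'$ is multiplicative on products but fails to commute with the exterior derivative. The discrepancy in the derivative is exactly where $K_z$ from Proposition~\ref{pLineAtInfinity} enters. First I would record two structural facts about prime. Writing $e=\deg C$, so that $C' = z^{e}\varphi^*C$, a direct Leibniz computation gives the corrected chain rule
\begin{align*}
(dC)' &= z\,d(C') - e\,C'\,\dz .
\end{align*}
Comparing the $z$-weights attached to $i$-forms in Definition~\ref{dHomogenization}, one also checks that prime is multiplicative on a polynomial times a $2$-form, $(CK_C)' = C'(K_C)'$, whereas a wedge of two $1$-forms acquires a factor $z^{-1}$, so that $(dC\wedge\omega)' = z^{-1}(dC)'\wedge\omega'$.

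Now I would apply prime to $dC\wedge\omega = CK_C$. Combining the three observations above gives $(dC)'\wedge\omega' = z\,C'(K_C)'$, and substituting the corrected chain rule rewrites the left-hand side as $z\,dC'\wedge\omega' - e\,C'\,(\dz\wedge\omega')$. Here Proposition~\ref{pLineAtInfinity} supplies $\dz\wedge\omega' = z\,K_z$; dividing the resulting identity by $z$ yields
\begin{align*}
dC'\wedge\omega' &= C'\bigl((K_C)' + e\,K_z\bigr).
\end{align*}
This simultaneously shows that $C'$ is an integral curve of $\omega'$ and identifies its cofactor as $K_{C'} = (K_C)' + (\deg C)\,K_z$, which is the first formula after rearranging.

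For the second formula I would run the same Leibniz idea one level up. Setting $d=\deg\omega$, so that $\omega' = z^{d+2}\varphi^*\omega$, expanding $d(\omega')$ produces the term $z^{d+2}\varphi^*(d\omega) = (d\omega)'$ together with a boundary term $(d+2)z^{d+1}\,\dz\wedge\varphi^*\omega$. Rewriting $z^{d+1}\varphi^*\omega = z^{-1}\omega'$ and again using $\dz\wedge\omega' = z\,K_z$ collapses the boundary term to $(d+2)K_z$, whence $(d\omega)' = d(\omega') - (\deg\omega+2)\,K_z$.

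The only real obstacle is bookkeeping: the exponents of $z$ in Definition~\ref{dHomogenization} are degree-dependent, so every product and every application of the exterior derivative shifts them, and one must check divisibility by $z$ before dividing. The conceptual content is light — every leftover term is a multiple of $\dz\wedge\omega'$, and Proposition~\ref{pLineAtInfinity} is precisely the statement that this combination equals $z\,K_z$, which is what lets both correction terms be expressed through the single cofactor $K_z$.
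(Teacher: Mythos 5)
Your proposal is correct and follows essentially the same route as the paper: both push $dC\wedge\omega = CK_C$ through the prime operation to get $(dC)'\wedge\omega' = z\,C'(K_C)'$, use the Leibniz rule relating $(dC)'$ to $d(C')$ (you solve for $(dC)'$, the paper for $d(C')$ — the same algebra rearranged), and invoke $\dz\wedge\omega' = zK_z$ from Proposition~\ref{pLineAtInfinity} to identify both correction terms, with the second formula derived identically. Your explicit bookkeeping of the $z$-weights (multiplicativity on polynomial-times-$2$-form, the $z^{-1}$ factor for wedges of $1$-forms) is a slightly more systematic packaging of what the paper does by multiplying the pulled-back identity by $z^{\deg C+\deg\omega+3}$ at once.
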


\begin{proof}
Since $C$ is an integral curve of $\omega$ with cofactor $C$ and $\varphi^*$ is a ring homomorphism with have
\[
     \varphi^*(dC) \wedge \varphi^*(\omega) = \varphi^*(C) \cdot \varphi^*(K_C). \quad \quad (\ast)
\]
Multiplying both sides with $z^{\deg C  + \deg \omega + 3}$ we obtain
\[
    (dC)' \wedge \omega' = zC'\cdot (K_C)' .
\]
Since takting differentials commutes with pullback, we have
\begin{align*}
    d(C')
    &= d\bigl(z^{\deg C} \, \varphi^*(C)\bigr)  \\
    &= (\deg C)z^{\deg C-1} \, \varphi^*(C)\dz + z^{\deg C} \varphi^*(dC) \\
    &= \frac{(\deg C) C' \dz}{z}+ \frac{(dC)'}{z}
\end{align*}
and hence
\begin{align*}
    d(C') \wedge \omega'
    &= \frac{(\deg C) C' \dz}{z} \wedge \omega'  + \frac{(dC)'}{z} \wedge \omega' \\
    &= (\deg C) C' \frac{\dz \wedge \omega'}{z} + \frac{zC' \cdot (K_C)'}{z} \quad \quad \text{(using $(\ast)$)}  \\
    &= (\deg C) C' \cdot K_z + C' \cdot (K_C)' \\
    &= C' \cdot \bigl((\deg C) K_x + (K_C)'\bigr).
\end{align*}
If follows that $C'$ is an integral curve of $\omega'$ with cofactor
\[
    K_{C'} = (\deg C) K_z + (K_C)'.
\]
Solving for $(K_C)'$ gives the first formula.

We also have
\begin{align*}
    d(\omega')
    &= d\bigl(z^{\deg \omega+2} \,\varphi^*(\omega)\bigr) \\
    & = (\deg \omega + 2) z^{\deg \omega+1} dz \wedge \varphi^*(\omega) + z^{\deg \omega+2}\varphi^*(d\omega)\\
    & = (\deg \omega + 2) \frac{\dz \wedge \omega'}{z} + (d\omega)' \\
    & = (\deg \omega + 2)  K_z + (d\omega)'.
\end{align*}
which gives the second formula.
\end{proof}

With this data we define a second invariant at a point at infinity, which can be computed geometrically in the chart at infinity using the results of Section \ref{sAnalytic}.

\begin{defn} \label{dEtaPrime}
Let $\omega$ be a differential form with algebraic integral curve $C$ and and $a \in U_{X\not=0}$ a point in the chart at infinity. Then we define
\[
    \eta' := \eta'(\omega,C,a) := \eta\bigl(\omega',\{z,C'\},a\bigr).
\]
This is well defined, since $C'$ and $z$ are algebraic integral curves of $\omega'$.
\end{defn}

We get the following relatation between $\eta$ and $\eta'$:

\begin{prop} \label{pEtaInfinity}
Let $\omega$ be a differential form  with algebraic integral curve $C$. Let $a \in U_{X\not=0}$ be a point in the chart at infinity such that $\omega(a) = 0$. If
\[
    \eta'(\omega,C,a) = (k_z:k:w)
\]
then
\[
    \eta(\omega,C,a) = \bigl(k-(\deg C) k_z:w-(\deg \omega+2) k_z\bigr).
\]
In other words, $\eta\in\PP^1$ is the linear projection of $\eta'\in\PP^2$ from the point
$$(1:\deg C:\deg\omega+2)$$ onto the line
$\{k_z=0\}\subset\PP^2$, identified with $\PP^1$ via $(0:k:w)\mapsto (k:w)$.
\end{prop}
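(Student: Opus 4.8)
The plan is to reduce everything to the two linear identities of 2-forms in Proposition~\ref{pRelationPrime} and then merely track projective coordinates. First I would unwind the two definitions: by Definition~\ref{dEtaPrime} we have
\[
  \eta'(\omega,C,a)=\eta(\omega',\{z,C'\},a)=\bigl(K_z(a):K_{C'}(a):d(\omega')(a)\bigr),
\]
while the extension of $\eta$ to infinity reads $\eta(\omega,C,a)=\bigl((K_C)'(a):(d\omega)'(a)\bigr)$. The key observation is that all five 2-forms occurring here --- $K_z$, $K_{C'}$, $d(\omega')$, $(K_C)'$ and $(d\omega)'$ --- live on the same two-dimensional chart $U_{X\not=0}$ with coordinates $y,z$, hence are all scalar multiples of the single generator $\dy\wedge\dz$. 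Writing $[\,\cdot\,]$ for the coefficient function against $\dy\wedge\dz$ turns ``evaluation at $a$'' into honest evaluation of polynomials, and the two 2-form identities of Proposition~\ref{pRelationPrime} become the numerical relations
\begin{align*}
[(K_C)'](a) &= [K_{C'}](a) - (\deg C)\,[K_z](a),\\
[(d\omega)'](a) &= [d(\omega')](a) - (\deg\omega+2)\,[K_z](a).
\end{align*}

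Next I would unwind the projective ratio. Writing $\eta'(\omega,C,a)=(k_z:k:w)$ means there is a scalar $\lambda$ (the degenerate case $\lambda=0$ being permitted) with $[K_z](a)=\lambda k_z$, $[K_{C'}](a)=\lambda k$ and $[d(\omega')](a)=\lambda w$. Substituting into the two displayed relations and factoring out the common $\lambda$ gives $[(K_C)'](a)=\lambda\bigl(k-(\deg C)k_z\bigr)$ and $[(d\omega)'](a)=\lambda\bigl(w-(\deg\omega+2)k_z\bigr)$, which is exactly
\[
  \eta(\omega,C,a)=\bigl(k-(\deg C)k_z : w-(\deg\omega+2)k_z\bigr),
\]
the all-zero case mapping to the allowed value $(0:0)$.

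For the geometric reformulation I would exhibit this assignment as the claimed linear projection. It is given by the two linear forms $L_1=k-(\deg C)k_z$ and $L_2=w-(\deg\omega+2)k_z$ on $\PP^2$, and a direct check shows that both vanish at $(1:\deg C:\deg\omega+2)$; since that single point is their entire common zero locus, it is the center of projection. Restricting to the target line $\{k_z=0\}$ gives $(0:k:w)\mapsto(k:w)$, which is precisely the stated identification with $\PP^1$, so the map is the linear projection from $(1:\deg C:\deg\omega+2)$ onto $\{k_z=0\}$.

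I expect no serious obstacle: the entire content is carried by Proposition~\ref{pRelationPrime}, and the only point requiring care is the bookkeeping that legitimizes passing from identities of 2-forms to identities of scalars --- namely fixing the single generator $\dy\wedge\dz$ of the one-dimensional space of 2-forms on the chart at infinity and checking (via Proposition~\ref{pLineAtInfinity} for $K_z$) that each quantity is expressed against that same generator with consistent orientation. The hypothesis $\omega(a)=0$ places us in the regime where $\eta$ and $\eta'$ are the meaningful invariants, but the identity itself follows formally from the 2-form relations.
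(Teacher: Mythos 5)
Your proposal is correct and follows essentially the same route as the paper: both proofs consist of unwinding the definitions of $\eta$ and $\eta'$ at a point in the chart at infinity and substituting the two identities of Proposition~\ref{pRelationPrime}. Your extra bookkeeping---fixing the generator $\dy\wedge\dz$, tracking the common scalar $\lambda$, and verifying that $(1:\deg C:\deg\omega+2)$ is the common zero of the two linear forms---is sound and makes explicit steps the paper leaves implicit, but does not change the argument.
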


\begin{proof}
By definition we have
\[
     (k_z:k:w) = \bigr(K_z(a):K_{C'}(a):d\omega'(a) \bigl)
\]
Using Proposition \ref{pRelationPrime} we can compute
\begin{align*}
    \eta(\omega,C,a)
    &= \bigl(K_C^h(a) : (d\omega)^h(a) \bigr) \\
    &= \bigl(K_C'(a) : (d\omega)'(a) \bigr) \\
    &= \bigl(K_{C'}(a) - (\deg C)K_z(a) : d\omega'(a) - (\deg \omega +2) K_z(a) \bigr) \\
    &= \bigl(k - (\deg C)k_z : w - (\deg \omega +2) k_z \bigr).
\end{align*}
\end{proof}

\begin{rem}
If $a\in U_{X\neq 0}$ lies \emph{not} on the line at infinity $L_\infty=\{z=0\}$, then $k_z(a)=0$. Hence
\[
  \eta'(\omega,C,a)=(0:k:w),
\]
and the projection formula of Proposition~\ref{pEtaInfinity} reduces to
\[
  \eta(\omega,C,a)=(k-(\deg C)\cdot 0:\,w-(\deg\omega+2)\cdot 0)=(k:w).
\]
In other words, away from $L_\infty$ we may identify $\eta$ with the last two coordinates of $\eta'$.
Geometrically this just says that in this case $\eta'$ already lies on the line $k_z=0$ and is
therefore unchanged by the projection.
\end{rem}

\begin{rem}[Extension to several integral curves] \label{rProjectionSeveralCurves}
Proposition~\ref{pEtaInfinity} extends verbatim to a collection of integral curves $\{C_1,\dots,C_r\}$. In particular, $\eta(\omega,\{C_i\},a)\in\PP^{r}$ is the projection of
$\eta'(\omega,\{C_i\},a)\in\PP^{r+1}$  from the point
\[
(1:\deg C_1:\cdots:\deg C_r:\deg\omega+2)\in\PP^{r+1}.
\]
to the line $k_z=0$.  The proof is identical to the case $r=1$.
\end{rem}

\begin{rem}
Combining Proposition~\ref{pEtaInfinity} with Theorem~\ref{tEtaQuasiHom}, we see that for quasi-homogeneous singularities of $C\cup\{z=0\}$ one can compute $\eta'$ from the local geometry, and recover $\eta$ by incorporating the global information encoded in the degrees of the integral curves.\end{rem}

\begin{example}
Let $\omega$ be a differential form with integral curve $C$ which is triply tangent to the line at infinity at a point $a$. In this situation $C \cap \{z=0\}$ has an $A_3$ singularity at $a$ and a local equation is $z(z-y^3)=0$.
This equation is quasi homogeneous with respect to $\wdeg z=3, \wdeg y=1$. It follows that
\[
    \eta'
    = (\deg_a z : \deg_a C :\deg_a y_a + \deg_a z_a)
    = (3:3:4).
\]
Projecting this form $(1 :\deg C : \deg \omega + 2)$ gives
\[
    \eta = \bigl(3-3(\deg C) : 4 - 3(\deg \omega + 2) \bigr).
\]
Notice that only the degree of $C$ and $\omega$ are used in this formula, but neither the
equation of $\omega$ nor the cofactor $K_C$ of $C$. If for example $\deg C = 3$ and $\deg \omega = 1$
we obtain
\[
    \eta = (-6 : -5) = (6 : 5).
\]
Let us compare this to a direct computation for an explicit example:
The differential form $\omega =  3x\dy - 2y\dx$ is has an integral curve
$C = x^2-y^3$ with cofactor $K_C = 6 \dx \wedge \dy$ since
\[
    dC \wedge \omega
    = (2x\dx - 3y^2\dy) \wedge (3x\dy - 2y\dx)
    = 6(x^2-y^3)\dx \wedge \dy.
\]
We have $C^h = X^2Z - Y^3$ and $C' = z-y^3$ and hence $C'$ has $\{z=0\}$ as a triple tangent. So the geometry of $\omega$ and $C$ agrees with the above assumptions.

Now $d\omega = 3\dx\wedge \dy - 2 \dy \wedge \dx = 5 \dx \wedge \dy$ and we can compute
\[
    \eta = \bigpfrac{K_C(a)}{d\omega(a)} = (6:5).
\]
This agrees with the value predicted by the geometry.
\end{example}

We now arrive at the main result of the paper: a concrete criterion for Darboux integrability that is computable purely from the geometry of $C\cup L_\infty$, without any further knowledge of $\omega$.
The main improvement over \cite{newSolutions} is that the present form also incorporates quasi-homogeneous singularities \emph{on} the line at infinity (via the projection step for $\eta'$), and thus applies in substantially more cases.

\begin{thm}\label{tEtaCriterion}
Let $C=C_1\cup\cdots\cup C_r$ be an algebraic integral curve of the differential form $\omega$, and let
$P_1,\dots,P_k$ be the $\eta$-geometric points of $C\cup L_\infty$. Set
\[
  M_{\eta'} \;=\;
  \begin{pmatrix}
    K_z(P_1) & K_{C_1}(P_1) & \cdots & K_{C_r}(P_1) & d\omega(P_1) \\
    \vdots   & \vdots       &        & \vdots       & \vdots \\
    K_z(P_k) & K_{C_1}(P_k) & \cdots & K_{C_r}(P_k) & d\omega(P_k) \\
    1        & \deg C_1     & \cdots & \deg C_r     & \deg\omega+2
  \end{pmatrix},
\]
the matrix whose $j$-th row is $\eta_j' := \eta'(\omega,\{C_1,\dots,C_r\},P_j)$ and whose last row is the projection center from Remark~\ref{rProjectionSeveralCurves}. Note that, by Theorem~\ref{tEtaQuasiHom}, all entries of $M_{\eta'}$ are determined by the geometry of $C\cup L_\infty$ alone.

If $\mathrm{rank}\,M_{\eta'}\le r+1$ and the points $P_1,\dots,P_k$ do not lie on any curve of degree $\deg\omega-1$,
then $\omega$ is Darboux integrable.
\end{thm}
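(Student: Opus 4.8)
The plan is to extract, from the rank hypothesis, a nontrivial Darboux relation among the \emph{affine} cofactors—either $\sum_i\alpha_i K_{C_i}=-d\omega$ or $\sum_i\alpha_i K_{C_i}=0$—and then invoke Darboux's theorem. First I would arrange, by a generic choice of the auxiliary line $\{X=0\}$ (equivalently of the coordinate $X$, keeping $L_\infty=\{Z=0\}$ fixed), that the finitely many $\eta$-geometric points $P_1,\dots,P_k$ all lie in the chart at infinity $U_{X\neq 0}$, so that every row $\eta_j'$ is defined; at the affine points one has $K_z(P_j)=0$ by the remark following Proposition~\ref{pEtaInfinity}, so such points are handled uniformly. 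Since $\rank M_{\eta'}\le r+1$ and $M_{\eta'}$ has $r+2$ columns, its kernel is nonzero, so I pick $0\neq u=(u_0,\dots,u_{r+1})$ with $M_{\eta'}u=0$. Orthogonality to the first $k$ rows gives $u\cdot\eta_j'=0$ for every $j$, and orthogonality to the last row gives the \emph{projection-center relation} $u_0+\sum_i u_i\deg C_i+u_{r+1}(\deg\omega+2)=0$.

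The heart of the argument is to convert these relations into one on $U_{Z\neq 0}$. Using Proposition~\ref{pRelationPrime} to rewrite $K_{C_i'}=(K_{C_i})'+(\deg C_i)K_z$ and $d(\omega')=(d\omega)'+(\deg\omega+2)K_z$, the vanishing $u\cdot\eta_j'=0$ becomes
\[
\Bigl[u_0+\sum_i u_i\deg C_i+u_{r+1}(\deg\omega+2)\Bigr]K_z(P_j)+\sum_i u_i(K_{C_i})'(P_j)+u_{r+1}(d\omega)'(P_j)=0 .
\]
Here the projection-center relation makes the bracket vanish, cancelling the $K_z$ term exactly; this is the step in which the last row of $M_{\eta'}$ does its work and the geometric projection from $(1:\deg C_1:\cdots:\deg\omega+2)$ is used. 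Setting $\Xi:=\sum_i u_iK_{C_i}+u_{r+1}\,d\omega$, a polynomial $2$-form on $U_{Z\neq 0}$ whose coefficient $g$ has degree $\le\deg\omega-1$, the surviving terms are precisely $\Xi'$, so I obtain $\Xi'(P_j)=0$ for all $j$. A short computation in the homogenization conventions of Definition~\ref{dHomogenization} (as in the worked examples $(\dx)'$, $(\dy)'$) identifies the coefficient of $\Xi'$ with the degree-$(\deg\omega-1)$ homogenization $g^h$ of $g$ read off in the chart, so $g^h$ vanishes at each projective point $P_j$.

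Now the hypothesis that $P_1,\dots,P_k$ lie on no curve of degree $\deg\omega-1$ forces $g^h\equiv 0$, hence $\Xi=0$, i.e.\ $\sum_i u_iK_{C_i}+u_{r+1}\,d\omega=0$. If $u_{r+1}\neq 0$ this reads $\sum_i(u_i/u_{r+1})K_{C_i}=-d\omega$, and Darboux's theorem yields the rational integrating factor $\prod_iC_i^{\,u_i/u_{r+1}}$. If $u_{r+1}=0$, then the projection-center relation forbids all the remaining $u_i$ from vanishing simultaneously (else $u=0$), so $\sum_i u_iK_{C_i}=0$ is a nontrivial dependence among the cofactors and $\prod_iC_i^{\,u_i}$ is a first integral. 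In every case $\omega$ is Darboux integrable.

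I expect the main obstacle to be the bookkeeping of the middle paragraph: verifying that the primed-cofactor identities of Proposition~\ref{pRelationPrime} assemble \emph{exactly} into $\Xi'$ after the $K_z$ cancellation, and that the coefficient of $\Xi'$ is the honest degree-$(\deg\omega-1)$ homogenization $g^h$—so that the no-curve hypothesis is applicable as a genuine statement about a curve of the correct degree in $\PP^2$—while also confirming that $\eta'$ is simultaneously defined and $\eta$-geometric at all $P_j$ after the coordinate normalization.
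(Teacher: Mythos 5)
Your proof is correct and takes essentially the same route as the paper: your kernel vector of $M_{\eta'}$ is the dual of the paper's hyperplane through the $\eta'_j$ and the projection center, and your explicit $K_z$-cancellation via Proposition~\ref{pRelationPrime} is exactly the content of the projection step (Proposition~\ref{pEtaInfinity}, Remark~\ref{rProjectionSeveralCurves}), after which both arguments conclude identically by forcing the degree-$(\deg\omega-1)$ polynomial $Q$ (your $g^h$) to vanish and invoking Darboux. Your two extra touches—choosing $X$ generically so all $P_j$ lie in $U_{X\neq 0}$, and the nontriviality check in the case $u_{r+1}=0$—are sound refinements of details the paper leaves implicit.
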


\begin{proof}
By the rank condition there exists a hyperplane in $\PP^{r+1}$ that contains all points $\eta_j'$ \emph{and} the projection center. Because the hyperplane contains the center, its image under the linear projection
$\pi \colon \PP^{r+1}\dashrightarrow\PP^{r}$ is again a hyperplane; hence the projected points
$\pi(\eta'_j)$ all lie on a hyperplane in $\PP^{r}$. 

By Remark~\ref{rProjectionSeveralCurves} we have
$\pi(\eta'_j) = \eta(\omega,\{C_1,\dots,C_r\},P_j)$.
It follows, that there exist coefficients $(\lambda_1,\dots,\lambda_r,\lambda_{r+1})\neq 0$ such that
\[
  \sum_{i=1}^r \lambda_i\,K^h_{C_i}(P_j)\;+\;\lambda_{r+1}\,d\omega^h(P_j)\;=\;0
  \qquad (j=1,\dots,k).
\]
Define
\[
  Q \;:=\; \sum_{i=1}^r \lambda_i\,K^h_{C_i}\;+\;\lambda_{r+1}\,d\omega^h .
\]
Then $Q$ is a homogeneous polynomial of degree $\deg\omega-1$ and $Q(P_j)=0$ for all $j$. By hypothesis, no nonzero polynomial of degree $\deg\omega-1$ vanishes on all $P_j$, hence $Q\equiv 0$. This yields a nontrivial linear relation among the cofactors and $d\omega$,
so $\omega$ is Darboux integrable.
\end{proof}

\section{Zeros outside of integral curves}
\nosubsections

\newcommand{\tC}{{\widetilde C}}

Consider a differential form $\omega$ with an integral curve $C$. A necessary condition for the existence of a center of $\omega$ at a point $a \in \CC^2$ is $\omega(a) = d\omega(a) = 0$. Notice that for quasi-homogeneous singularities $a$ of $C$ we have $\omega(a)=0$ but usually $d\omega(a) \not=0$, see Theorem \ref{tEtaQuasiHom}. In this section we will concern ourselves with the question where and under what conditions points with $\omega(a) = d\omega(a)=0$ exist.

\begin{prop}
Consider a differential form $\omega$ with an integral curve $C$ and cofactor $K_C$. If $a \not\in C$ is a zero of $\omega$  then $K_C(a) = 0$.
\end{prop}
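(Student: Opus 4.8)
The plan is to read off the result directly from the defining identity of the cofactor, evaluated at the point $a$. First I would recall from the definition of an integral curve that
\[
    dC \wedge \omega = C \cdot K_C,
\]
an identity of $2$-forms valid on all of $\CC^2$. Writing $\omega = P\dx + Q\dy$ and expanding the left-hand side as in the proof of Proposition~\ref{pDarbouxMatrix} gives
\[
    dC \wedge \omega = (C_x Q - C_y P)\,\dx\wedge\dy,
\]
so the coefficient of the $2$-form $dC\wedge\omega$ is a polynomial that is linear and homogeneous in the coefficients $P$ and $Q$ of $\omega$.

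Next I would evaluate both sides at $a$. Since $a$ is a zero of $\omega$, we have $P(a) = Q(a) = 0$, and hence the coefficient $C_x(a)Q(a) - C_y(a)P(a)$ vanishes; that is, $(dC \wedge \omega)(a) = 0$. On the other hand, the cofactor identity gives $(dC\wedge\omega)(a) = C(a)\,K_C(a)$. Combining these, we obtain $C(a)\,K_C(a) = 0$.

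Finally, the hypothesis $a \notin C$ means precisely that $C(a)\neq 0$. Dividing the relation $C(a)\,K_C(a)=0$ by the nonzero scalar $C(a)$ forces $K_C(a) = 0$, as claimed. I do not expect any genuine obstacle here: the argument is a direct evaluation of a polynomial identity, and the only point requiring a moment's care is the observation that the coefficient of $dC\wedge\omega$ is linear in the coefficients of $\omega$ and therefore vanishes wherever $\omega$ does.
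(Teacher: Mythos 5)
Your proof is correct and follows essentially the same route as the paper: evaluate the cofactor identity $dC\wedge\omega = C\cdot K_C$ at $a$, note that $\omega(a)=0$ forces the left side to vanish, and divide by $C(a)\neq 0$. The only difference is that you spell out the expansion $dC\wedge\omega=(C_xQ-C_yP)\,\dx\wedge\dy$ to justify the vanishing, which the paper leaves implicit.
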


\begin{proof}
Since $C$ is an integral curve of $\omega$ we have
\[
 dC \wedge \omega = C \cdot K_C
\]
where $K_C$ is the cofactor of $C$. Since $\omega(a) =0$ but $C(a) \not=0$ we must have $K_C(a) = 0$
\end{proof}

We now use Darboux' condition for the existence of an integrating factor:

\begin{prop}
Let $\omega$ be a differential form, $C_1,\dots,C_r$ algebraic integral
curves of $\omega$, $K_1,\dots,K_r$ their cofactors and $a$ a zero of $\omega$ with $a \not\in C_i$ for $i=1\dots r$. Assume that
\[
    \sum \alpha_i K_i = -d\omega
\]
for appropriate $\alpha_i \in \CC$. Then $d\omega(a) = 0$.
\end{prop}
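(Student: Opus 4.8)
The plan is to combine the two immediately preceding propositions. The hypothesis gives a zero $a$ of $\omega$ with $a\notin C_i$ for every $i$, together with a Darboux relation $\sum_i \alpha_i K_i = -d\omega$. Since $a$ lies on none of the integral curves $C_i$, the preceding proposition applies to each $C_i$ separately: because $C_i(a)\neq 0$ while $\omega(a)=0$, the cofactor relation $dC_i\wedge\omega = C_i\cdot K_i$ forces $K_i(a)=0$ for all $i=1,\dots,r$.

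From here the conclusion is immediate by evaluating the assumed relation at $a$. Explicitly, I would write
\begin{align*}
  d\omega(a) &= -\sum_{i=1}^r \alpha_i K_i(a) \\
             &= -\sum_{i=1}^r \alpha_i \cdot 0 \\
             &= 0.
\end{align*}
Thus $d\omega(a)=0$, as claimed.

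The only thing to be careful about is the logical structure rather than any computation: one must apply the previous proposition once for \emph{each} curve $C_i$, using the hypothesis $a\notin C_i$ for all $i$ (not merely for one), so that every term $\alpha_i K_i(a)$ in the sum vanishes. There is no genuine obstacle here; the statement is essentially a one-line corollary of the previous proposition applied to each integral curve, and the main content lies entirely in the two propositions it invokes. The result is useful precisely because, combined with the earlier observation that $\omega(a)=0$ for a zero outside $C$, it upgrades such a point to one satisfying both $\omega(a)=0$ and $d\omega(a)=0$, which is the necessary condition for a center identified at the start of this section.
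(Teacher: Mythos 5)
Your proof is correct and follows exactly the paper's own argument: apply the preceding proposition to each $C_i$ (using $\omega(a)=0$ and $C_i(a)\neq 0$) to get $K_i(a)=0$, then evaluate the relation $\sum \alpha_i K_i = -d\omega$ at $a$. Nothing is missing.
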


\begin{proof}
By the previous Proposition we have $K_i(a) = 0$ for $i=1\dots r$ and therefore
\[
    d\omega(a) = -\sum \alpha_i K_i(a) = 0.
\]
\end{proof}

We want to estimate the number of zeros of $\omega$ that lie outside of  an integral curve $C$ using the only the geometry of $C$. For this we need a standard Chern class computation:

\begin{lem} \label{lChern}
Let
\[
0 \longrightarrow K \longrightarrow E
\longrightarrow
L \longrightarrow Q \longrightarrow 0
\]
be an exact sequence of coherent sheaves on $\PP^2$, where
\begin{enumerate}
  \item $E$ is a vector bundle of rank $r$,
  \item $L$ is a line bundle,
  \item $Q$ is zero‐dimensional of length
  $\ell =\deg(Q)$.
\end{enumerate}
Then $K$ is a vector bundle of rank $r-1$, and its Chern classes satisfy
\[
\begin{aligned}
  c_{1}(K)  &=  c_{1}(E)-c_{1}(L), \\
  c_{2}(K)  &=  c_{2}(E)-c_{1}(E)\,c_{1}(L)+c_{1}(L)^{2}-\ell.
\end{aligned}
\]
\end{lem}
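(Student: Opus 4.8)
The plan is to establish the statement in two stages: first the exactness bookkeeping that identifies $K$ as a rank-$(r-1)$ vector bundle, and then the Chern class computation itself via splitting the four-term sequence into short exact pieces. I would begin by observing that the map $E \to L$ has image $\Img(E\to L) \subset L$, and that the cokernel of this image is exactly $Q$, which is zero-dimensional. Since a line bundle on $\PP^2$ is in particular a rank-$1$ reflexive sheaf, and $Q$ is supported in codimension $2$, the image sheaf $\Img(E\to L)$ agrees with $L$ away from finitely many points; its saturation is $L$. The kernel $K$ of the composite is then the kernel of the surjection $E \twoheadrightarrow \Img(E\to L)$, and I would argue that $K$ is reflexive (being a kernel of a map of vector bundles into a torsion-free sheaf) and hence, on the smooth surface $\PP^2$, locally free of rank $r-1$.

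The core computation is additivity of the total Chern class on short exact sequences. The plan is to break the four-term sequence
\[
0 \longrightarrow K \longrightarrow E \longrightarrow L \longrightarrow Q \longrightarrow 0
\]
into the two short exact sequences
\[
0 \longrightarrow K \longrightarrow E \longrightarrow I \longrightarrow 0,
\qquad
0 \longrightarrow I \longrightarrow L \longrightarrow Q \longrightarrow 0,
\]
where $I := \Img(E\to L)$. Whitney's formula gives $c(E) = c(K)\,c(I)$ and $c(L) = c(I)\,c(Q)$, so that
\[
c(K) \;=\; \frac{c(E)}{c(I)} \;=\; \frac{c(E)\,c(Q)}{c(L)}.
\]
Here $c(Q) = 1 + c_1(Q) + c_2(Q)$ with $c_1(Q)=0$ (as $Q$ is zero-dimensional) and $c_2(Q) = -\ell$ in $A^2(\PP^2)\cong\ZZ$, the class of a length-$\ell$ scheme contributing $-\ell$ to the second Chern class via the standard sign convention for torsion sheaves. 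I would then expand $c(L)^{-1} = 1 - c_1(L) + c_1(L)^2 - \cdots$ and read off the graded pieces in degrees $1$ and $2$ of $c(E)\,c(Q)\,c(L)^{-1}$, which yields $c_1(K) = c_1(E) - c_1(L)$ and $c_2(K) = c_2(E) - c_1(E)c_1(L) + c_1(L)^2 - \ell$ exactly as claimed.

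The main obstacle I anticipate is not the formal Chern class algebra, which is routine, but rather pinning down the correct sign and Chern-class contribution of the torsion sheaf $Q$ — namely justifying that a length-$\ell$ zero-dimensional sheaf contributes $c_2(Q) = -\ell$ and $c_1(Q)=0$. One clean way to handle this is to note that Chern classes are additive over short exact sequences and that $Q$, being zero-dimensional of length $\ell$, has the same Chern character as $\ell$ copies of a skyscraper sheaf; a skyscraper at a point has Chern character $\ch = \ell\cdot[\mathrm{pt}]$ sitting in degree $2$, whence $c_1(Q) = 0$ and $c_2(Q) = -\ell$ by the Grothendieck--Riemann--Roch translation between $\ch$ and $c$ in low degrees. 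I would present this as a brief lemma or a cited standard fact rather than a computation, since it is the one place where a sign error could propagate. The remaining care is purely clerical: keeping track of which terms survive in $A^\bullet(\PP^2)$, where everything in degree $>2$ vanishes.
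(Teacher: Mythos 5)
Your proposal is correct and takes essentially the same route as the paper: splitting the four-term sequence at $I=\Img(E\to L)$ and combining Whitney's formula twice is equivalent to the paper's one-step ``alternating Whitney identity'' $c(K)\,c(L)=c(E)\,c(Q)$, with the same key input $c(Q)=1-\ell\,[\mathrm{pt}]$. The only cosmetic difference is in justifying local freeness of $K$: the paper cites the fact (from Okonek--Schneider--Spindler) that second syzygies of finite-length sheaves on a smooth surface are locally free, while you argue via reflexivity of the kernel of a map from a bundle to a torsion-free sheaf --- two standard formulations of the same point.
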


\begin{proof}
Since $E$ and $L$ are locally free, $K$ is a second syzygy sheaf of $Q$.
As $Q$ has finite support on the smooth surface $\PP^2$, any second syzygy
is locally free: see \cite[II, §1, Def.~1.1.5 \& Thm.~1.1.6]{OSS}.
Thus $K$ is a vector bundle of rank $r-1$.

Exactness gives the alternating Whitney identity for total Chern classes
\[
  c(K)\,c(L)=c(E)\,c(Q).
\]
Here $c(L)=1+c_1(L)$ since $L$ is a line bundle, and since $Q$ is zero-dimensional of length $\ell$, we have $c(Q)=1-\ell\,[\mathrm{pt}]$.
Comparing degrees $1$ and $2$ yields
\[
\begin{aligned}
  c_1(K)+c_1(L)&=c_1(E),\\
  c_2(K)+c_1(K)c_1(L)&=c_2(E)-\ell,
\end{aligned}
\]
and hence
\[
\begin{aligned}
  c_1(K) & =c_1(E)-c_1(L) \\
  c_2(K) &= c_2(E)-c_1(E)c_1(L)+c_1(L)^2-\ell
\end{aligned}
\]
\end{proof}

With this we can compute our formula:

\begin{prop} \label{pNumZeros}
Let $C$ be an algebraic integral curve of a differential form $\omega$ and $X$ the finite scheme defined by $C_x=C_y=C=0$. Then $\omega$ has at most
\[
    (\deg \omega)^2 +
    (\deg C)(\deg C - \deg \omega - 1) - \deg X
\]
zeros outside of $C$.
\end{prop}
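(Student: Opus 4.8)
The plan is to realize $\omega$ as a global section of a rank-$2$ vector bundle on $\PP^2$ whose top Chern class counts its zeros, placing ourselves in exactly the situation of Lemma~\ref{lChern}. Set $d=\deg\omega$ and $e=\deg C$, and write $K_C=\kappa\,\dx\wedge\dy$, so that $\kappa$ is homogeneous of degree $d-1$. First I would introduce the sheaf morphism given by the Darboux matrix,
\[
  M_C\colon E:=\sO(1-e)^2\oplus\sO(-e)\longrightarrow\sO,
  \qquad (a,b,c)\mapsto C_x a+C_y b+C\,c,
\]
whose cokernel is the structure sheaf $\sO_X$ of the finite scheme $X=V(C_x,C_y,C)$, of length $\deg X$. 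Because $X$ is $0$-dimensional, the kernel $\sK:=\ker M_C$ is locally free of rank $2$, and we are in the setting of Lemma~\ref{lChern} with $L=\sO$ and $\ell=\deg X$.

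Next I would identify $\omega$ with a section of a twist of $\sK$. By Proposition~\ref{pDarbouxMatrix} the integral-curve condition reads $C_xQ-C_yP-C\kappa=0$, so the triple $s:=(Q,-P,-\kappa)$ lies in $\ker M_C$. Comparing degrees ($\deg P=\deg Q=d$ and $\deg\kappa=d-1$) shows that, after twisting by $m:=d+e-1$, we have $E(m)=\sO(d)^2\oplus\sO(d-1)$ and $s\in H^0\bigl(\sK(m)\bigr)$.

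The computational core is then a Chern-class calculation. From $c_1(E)=(2-3e)h$ and $c_2(E)=(1-e)(1-3e)h^2$, Lemma~\ref{lChern} gives
\[
  c_1(\sK)=(2-3e)h,\qquad c_2(\sK)=\bigl((1-e)(1-3e)-\deg X\bigr)h^2 .
\]
Reading off the top Chern class of the rank-$2$ bundle $\sK(m)$ via
\[
  c_2\bigl(\sK(m)\bigr)=c_2(\sK)+m\,h\,c_1(\sK)+m^2h^2,
\]
and expanding with $m=d+e-1$, I expect the expression to collapse to $(\deg\omega)^2+(\deg C)(\deg C-\deg\omega-1)-\deg X$, which is exactly the claimed bound.

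Finally I would relate this number to the zeros off $C$. At a zero $a$ of $\omega$ with $a\notin C$ one has $C(a)\neq0$, and the cofactor relation $C_xQ-C_yP=C\kappa$ forces $\kappa(a)=0$; thus every zero of $\omega$ outside $C$ also lies in the zero scheme $Z(s)=\{P=Q=\kappa=0\}$. When $Z(s)$ is $0$-dimensional its length equals $c_2(\sK(m))$, and discarding the nonnegative contribution of $Z(s)\cap C$ yields the stated inequality. The step I expect to demand the most care is the finiteness of $Z(s)$: as long as $P$ and $Q$ share no common factor—the case in which $\omega$ has only isolated zeros—the locus $\{P=Q=0\}$, and hence $Z(s)$, is automatically $0$-dimensional and the Chern number is an honest length count. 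The remaining bookkeeping of the twists is routine but error-prone, and I would re-expand the final polynomial carefully to confirm it matches the stated formula.
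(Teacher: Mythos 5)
Your proposal is correct and follows essentially the same route as the paper: the same four-term sequence built from the Darboux matrix, the identification of $(Q,-P,-K_C)$ as a section of the kernel bundle via Proposition~\ref{pDarbouxMatrix}, and the same Chern-class count through Lemma~\ref{lChern}, with your untwisted setup ($E\to\sO$, then twisting $\sK$ by $m=\deg\omega+\deg C-1$) differing from the paper's pre-twisted sequence only by a normalization, and your final expansion indeed collapses to the stated bound. Your explicit flagging of the zero-dimensionality of $Z(s)$ matches the paper's own caveat (``if $Y$ is zero dimensional'') and is the one genuine hypothesis both arguments share.
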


\begin{proof}
Let $a$ be a zero of $\omega = P\dx + Q\dy$ outside of $C$, i.e $P(a) = Q(a) = 0$ and $C(a) \not=0$. Since $C$ is an integral curve of $\omega$ we have
\[
    dC \wedge \omega = CK_C
\]
where $K_C$ the cofactor of $C$. Since $\omega(a)=0$ and $C(a) \not=0$ we must also have $K_C(a) = 0$.

Hence every zero $a\notin C$ of $\omega$ lies in the zero scheme
\[
Y:=V(P,Q,K_C).
\]
In particular, the number of zeros of $\omega$ outside $C$ (without multiplicity) is bounded above by $\deg Y$; with multiplicities.

Consider the exact sequence
\[
  0 \rightarrow \sK \rightarrow
  \underbrace{\sO(s)\oplus\sO(s)\oplus\sO(s-1)}_{=:E}
  \xrightarrow{(C_x,\;C_y,\;C)}
  \underbrace{\sO(s+d-1)}_{=:L}
  \rightarrow \sO_X \rightarrow 0,
\]
with \(s=\deg\omega\) and \(d=\deg C\).
From
\[
dC\wedge\omega = CK_C
\quad\Longleftrightarrow\quad
C_xQ-C_yP-CK_C = 0
\]
we see that $(Q,\,-P,\,-K_C)^t\colon \sO\to E$ factors through \(\sK\). Hence it defines a section of $\sK$
whose zero scheme is $Y$.  By Lemma \ref{lChern} $\sK$ is a vector bundle and hence, if $Y$ is zero dimensional,  $\deg Y = c_2(\sK)$.

To apply the formula of Lemma \ref{lChern} we compute the
chern classes of $E$ and $L$ in our current situation:
\begin{align*}
    c_1(L) &= s+d-1  \\
    c_1(E) & = 3s-1 \\
    c_2(E) & = s^2 + 2s(s-1) = 3s^2 - 2s.
\end{align*}
We therefore have
\begin{align*}
    c_2(K)
    &= c_{2}(E)-c_{1}(E)c_{1}(L)+c_{1}(L)^{2}-\ell \\
    &= (3s^2-2s) - (3s-1)(s+d-1)+(s+d-1)^2 - \deg X \\
    &= s^2 - sd + d^2 - d - \deg X \\
    &= s^2 + d(d - s - 1) - \deg X.
\end{align*}
\end{proof}

\begin{example} \label{eOnePointOnly}
If $C$ is a sextic with $\deg X = 20$ then there exists an $\omega$ of degree $3$ with $C$ as integral curve. The proposition above gives that there is at most
\[
    3^2 + 6(6-3-1) - 20 = 1
\]
zero of $\omega$ outside of $C$.
\end{example}

We now count the number of zeros of $\omega$ at infinity.

\begin{prop} \label{pZerosAtInfinity}
Let $C$ be an integral curve of a differential form $\omega$ and $X_\infty$ the finite scheme defined by $C_x=C_y=C=z=0$ over some field $k$.  Assume furthermore that
\[
        \deg X_\infty \le \deg C - \deg \omega - 2
\]
and that the characteristic of $k$ does not divide $\deg C$.
Then $\omega$ has, counted with multiplicity, exactly
\[
    \deg \omega - 1
\]
zeros at infinity.
\end{prop}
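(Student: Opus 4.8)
The plan is to read the zeros of $\omega$ at infinity off the \emph{leading forms} of $P$ and $Q$, and then to use the integral–curve condition to force these leading forms to share a factor of degree $\deg\omega-1$. Write $s=\deg\omega$, $d=\deg C$, and let $P_s,Q_s\in k[X,Y]$ be the degree-$s$ homogeneous parts of $P,Q$, so that $P_s=P^h|_{L_\infty}$ and $Q_s=Q^h|_{L_\infty}$ on $L_\infty=\{z=0\}$. A point of $L_\infty$ is a zero of $\omega$ exactly when it lies on $V(P^h,Q^h)$, so (assuming $P,Q$ have no common factor, so that $\gcd(P^h,Q^h)=1$ and $V(P^h,Q^h)$ is genuinely zero–dimensional) the scheme of zeros at infinity is $V(P^h,Q^h)\cap L_\infty=V(P_s,Q_s)\subset\PP^1$, whose length is $\deg\gcd(P_s,Q_s)$. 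Thus I would reduce the proposition to the purely algebraic statement that $\gcd(P_s,Q_s)$ has degree $s-1$.

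First I would extract the top–degree part of the integral–curve identity $C_XQ-C_YP=C\kappa$, where $K_C=\kappa\,\dx\wedge\dy$. Comparing homogeneous pieces of degree $d+s-1$ gives
\[
  (C_d)_X\,Q_s-(C_d)_Y\,P_s=C_d\,\kappa_{s-1},
\]
with $\kappa_{s-1}$ the leading form of $\kappa$. Pairing this with Euler's identity $X(C_d)_X+Y(C_d)_Y=d\,C_d$ — the first place where $\Char k\nmid d$ enters — Cramer's rule yields
\[
  (C_d)_X\,G=C_d\,(d\,P_s+Y\kappa_{s-1}),\qquad
  (C_d)_Y\,G=C_d\,(d\,Q_s-X\kappa_{s-1}),
\]
where $G:=XP_s+YQ_s$ is a binary form of degree $s+1$. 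In particular $C_d\mid(C_d)_X\,G$ and $C_d\mid(C_d)_Y\,G$.

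Next I would argue that $G$ vanishes at every point $p\in C\cap L_\infty$. At a transverse crossing this is immediate: since both $C$ and $L_\infty$ are integral curves of the (homogenized) form, $\omega$ must vanish where two integral curves cross transversally, and on $L_\infty$ the vanishing of $\omega^h$ is exactly $G(p)=0$; equivalently, the displayed relations give $G(p)=0$ whenever $(C_d)_X(p)$ or $(C_d)_Y(p)$ is nonzero. At a tangency the same relations still give $\operatorname{ord}_pG\ge1$ by comparing orders of vanishing of $(C_d)_X$ (or $(C_d)_Y$) against $C_d$. The hypothesis $\deg X_\infty\le d-s-2$, combined with $\Char k\nmid d$ (so that $\deg X_\infty\ge d-\#(C\cap L_\infty)$ via the modified Tjurina contributions at infinity), forces $C$ to meet $L_\infty$ in at least $s+2$ distinct points. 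A binary form of degree $s+1$ vanishing at $s+2$ distinct points is identically zero, so $G\equiv0$. Then $(P_s,Q_s)$ is a Koszul syzygy of $(X,Y)$, whence $P_s=YR$ and $Q_s=-XR$ for a nonzero binary form $R$ of degree $s-1$; since $\gcd(X,Y)=1$ we obtain $\gcd(P_s,Q_s)=R$ of degree $s-1$, which is the asserted count. (As a cross–check, the factorization $\omega^h=Z\tilde\omega$ that this produces exhibits a reduced foliation of degree $s-1$, and Bézout for $V(P^h,Q^h)$ is consistent with $s-1$ zeros on $L_\infty$.)

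The hard part, I expect, is the bookkeeping at the non–transverse points of $C\cap L_\infty$: showing $G$ vanishes there, and not merely at the transverse intersections, rests on the orders of vanishing of $(C_d)_X,(C_d)_Y$ relative to $C_d$, and it is precisely $\Char k\nmid d$, fed through Euler's identity, that both guarantees at least $s+2$ distinct points at infinity and keeps these local multiplicities in line (the delicate case being tangencies whose intersection order is divisible by the characteristic, which the stated hypothesis is designed to exclude in the relevant range). A secondary point to verify is that $\omega$ is primitive, so that $V(P^h,Q^h)$ is zero–dimensional of the expected length and carries no excess component along $L_\infty$.
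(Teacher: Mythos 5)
Your strategy is sound and genuinely different in execution from the paper's. The paper also restricts the identity $C_xQ-C_yP=C\kappa$ to $L_\infty$, but then argues module-theoretically: using $\Char k\nmid\deg C$ it shows that the syzygy module of $(C_{\infty,x},C_{\infty,y},C_\infty)$ over $k[x,y]$ is generated by the Euler syzygy $(x,y,-\deg C)^{\mathsf t}$ and the column $\tfrac{1}{g}(C_{\infty,y},-C_{\infty,x},0)^{\mathsf t}$ with $g=\gcd(C_{\infty,x},C_{\infty,y})$; since $\deg g=\deg X_\infty$, the second generator has degree $\deg C-1-\deg X_\infty\ge \deg\omega+1$, so the degree-$s$ syzygy $(Q_\infty,-P_\infty,-\kappa_\infty)^{\mathsf t}$ must be $h\cdot(x,y,-\deg C)^{\mathsf t}$ with $\deg h=s-1$ --- a single degree comparison, with no local analysis at the intersection points. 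Your Cramer elimination producing $G=XP_s+YQ_s$ with $C_d\mid (C_d)_X G$ and $C_d\mid (C_d)_Y G$ reaches the same structural conclusion ($P_s=YR$, $Q_s=-XR$, your $R$ being the paper's $h$ up to sign) by forcing $G\equiv 0$ through point counting; both proofs hinge on Euler plus $\Char k\nmid\deg C$ plus the bound on $\deg X_\infty$, but you trade the syzygy classification for a study of vanishing orders, which is where the delicacy you flag enters.

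That flagged step is a real gap as written: at a point $p$ where $\Char k$ divides the intersection multiplicity $m_p$ of $C$ with $L_\infty$, both $(C_d)_X$ and $(C_d)_Y$ vanish to order at least $m_p$, so your divisibility relations are vacuous there, and the hypotheses do \emph{not} exclude such points --- for instance in characteristic $2$ with $d=9$, $s=3$, a top form $C_d$ with one double root and seven simple roots has $\deg X_\infty=2\le d-s-2=4$. What saves your argument is a sharper count rather than exclusion: at a \emph{good} point ($\Char k\nmid m_p$) the gcd $g$ vanishes to order exactly $m_p-1$, while at a \emph{bad} point ($\Char k\mid m_p$) both partials vanish to order at least $m_p$, and $g\mid C_d$ (here $\Char k\nmid\deg C$ is used via Euler) caps the order at exactly $m_p$. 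Hence $\deg X_\infty=\deg g=d-\#\{\text{good points}\}$, so the hypothesis $\deg X_\infty\le d-s-2$ gives at least $s+2$ good points, your local vanishing argument applies at each of them, and $G\equiv 0$ follows since $\deg G=s+1$. With that patch the rest of your proof goes through; note also that your primitivity caveat is unnecessary, since $V(P_s,Q_s)\subset\PP^1$ is finite in any case and both you and the paper count precisely its length $\deg\gcd(P_s,Q_s)$, and that one should observe $R\neq 0$ (otherwise $\deg\omega<s$).
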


\begin{proof}
Since $C$ is an integral curve of $\omega$, we have
\[
  (C_x,C_y,C)\cdot(Q,-P,-K_C)^{t}=0,
\]
where $K_C$ is the cofactor of $C$.
Denote by the subscript $\infty$ the restriction to the line at infinity $\{z=0\}$.
Then
\[
  (Q_\infty,-P_\infty,-K_{C,\infty})^{t}
\]
is a syzygy of $(C_{\infty,x},C_{\infty,y},C_\infty)$ over $k[x,y]$.

We claim that the columns of
\[
\begin{pmatrix}
x &\frac{1}{g}C_{\infty,y}\\
y &-\frac{1}{g}C_{\infty,x} \\
-\deg C& 0
\end{pmatrix}
\qquad\text{with } \; g:=\gcd(C_{\infty,x},C_{\infty,y})
\]
generate the $k[x,y]$-module $\mathrm{Syz}(C_{\infty,x},C_{\infty,y},C_\infty)$.
Indeed, the first column comes from the  Euler relation
\[
  xC_{\infty,x}+yC_{\infty,y} = (\deg C)C_\infty.
\]
Since $\deg C\neq 0$ over $k$, we may assume that all further generators of the syzygy module have zero in the third entry. All such syzygies are between $C_{\infty,x}$ and $C_{\infty,y}$ only and are therefore generated by the second column.

Thus $(Q_\infty,-P_\infty,-K_{C,\infty})^{t}$ is a $k[x,y]$-linear combination of these two columns.
Note that $\deg g=\deg X_\infty$, so the degree of the second column equals
\[
  \deg(C)-1-\deg X_\infty\ge \deg \omega+1
\]
by our assumption on $X_\infty$.
Therefore the combination for
$$(Q_\infty,-P_\infty,-K_{C,\infty})^{t}$$
must involve only the first column.
The coefficient then has degree $\deg \omega -1$, and hence $\omega$ has $\deg \omega-1$ zeros at infinity (counted with multiplicity).
\end{proof}

\begin{example} \label{eMaximalInfinity}
Let $C\subset\PP^2$ be a maximal sextic, i.e.\ a sextic with simple singularities and maximal Milnor number $19$. Assume in addition that the line at infinity meets $C$ with multiplicity $2$ and transversely elsewhere. In this situation the modified Tjurina number of $C$ is $20$, and differential forms $\omega$ of degree $3$ with integral curve $C$ exist. By Example~\ref{eOnePointOnly} there is at most one zero of $\omega$ outside $C$.

Now $\deg X_\infty=1$ and $\deg C-\deg\omega-2=1$; therefore, by Proposition~\ref{pZerosAtInfinity}, we have at least
\[
  \deg\omega-1=2
\]
zeros on the line at infinity. Generically we expect at most one of them at the multiplicity-$2$ point and none at the transverse intersections. Hence, generically, the second point lies outside $C$ but on the line at infinity. It follows that, even if we can prove that $\omega$ admits a Darboux integrating factor with respect to $C$, we expect no centers of $\omega$ outside $C$ and the line at infinity. For this reason we do not consider maximal sextics in our constructions.
\end{example}

\begin{example}  \label{eSubMaximalZeros}
Let $C\subset\PP^2$ be a sub-maximal sextic, i.e.\ a sextic with simple singularities and Milnor number $18$. Assume in addition that the line at infinity meets $C$ with multiplicities $(2,2,1,1)$ or $(3,1,1,1)$. In this situation the modified Tjurina number of $C$ is again $20$, and differential $1$-forms $\omega$ of degree $3$ with integral curve $C$ exist.
Again, we expect only one zero of $\omega$ outside $C$, \emph{but} here $\deg X_\infty=2$ and $\deg C-\deg\omega-2=1$, so Proposition~\ref{pZerosAtInfinity} does not apply. Indeed, in examples the expected zero lies away from the line at infinity. All constructions in this article are of this type.
\end{example}

\section{Review of results in finite characteristic}
\nosubsections

In this section we recall the results obtained by our group in \cite{survey}, \cite{ste:2011}.

Let $X_{13} \subset (\FF_{29})^{14}$ denote the common vanishing locus of the first thirteen focal values in the space of normalized degree-3 differential forms. We work in characteristic $29$, which is the smallest prime for which Frommer's algorithm is well defined for the first thirteen focal values.

In \cite{centerfocusWeb} Jakob Kr\"oker developed a very fast C++ implementation of Frommer's algorithm and tested, for (almost) all points of $(\FF_{29})^{14}$, whether they lie on $X_{13}$. He also computed the codimension of the Zariski tangent space of $X_{13}$ at each point found. This required about 11 CPU-years on a compute cluster, and the results are collected in a database available at \cite{centerFocusDatabase}.

Using only the number of points found one can heuristically estimate the number of irreducible components of $X_{13}$ in each codimension. Indeed a variety of dimension $d$ has on the order of $p^{d}$ points over $\FF_{p}$ (a very rough heuristic inspired by the Weil conjectures, see \cite{weilconjectures},\cite{deligneproof}). Using this heuristic, Kr\"oker~\cite{survey} obtains the estimates collected in the second column of Table~\ref{tab:steiner-vs-kroeker}. In that computation, points are bucketed by the codimension $r$ of the Zariski tangent space. At singular points of $X_{13}$ the codimension of the tangent space  drops below codimension of the containing component, so such points are assigned to lower–codimension buckets. This depresses the counts in higher codimensions; thus the second column should be read as a conservative lower bound.

\begin{table}
\centering
\begin{tabular}{c|c|c|c}
\hline
\textbf{codimension $r$} & \textbf{by counting points} & \textbf{by interpolation} & \textbf{kown}\\
\hline
5  & $\ge 1.00\pm 0.00$ & 1 & 1 \\
6  & $\ge 1.96\pm 0.00$ & 2 & 2\\
7  & $\ge 3.84\pm 0.00$ & 4 & 4\\
8  & $\ge 3.83\pm 0.00$ & 4 & 4\\
9  & $\ge 11.97\pm 0.04$    & 14 & 10 \\ 
10 & $\ge 32.85\pm 0.37$    & 41 & 4\\
11 & $\ge 77.50\pm 3.07$    & 98 & 4\\
\hline
\end{tabular}
\caption{Estimated numbers of irreducible components of $X_{13}\subset \FF_{29}^{14}$ that contain a smooth $\FF_{29}$-rational point of $X_{13}$.}
\label{tab:steiner-vs-kroeker}
\end{table}

Based on Kr\"oker’s database, Johannes Steiner~\cite{ste:2011} computed a heuristic decomposition of $X_{13}$ over $\FF_{29}$, together with low-degree candidate equations for the conjectured components. The method lifts smooth points of $X_{13}$ to higher-order local solutions (jets) via a Hensel-type procedure and then uses interpolation to recover low-degree polynomials vanishing on these jets (a finite-field analogue of~\cite{SomVerWam:2001}). Counting distinct candidate ideals, he conjectures the component counts in the third column of Table~\ref{tab:steiner-vs-kroeker}. Concrete low-degree generators for the candidate ideals are available at~\cite{cfGitHub}.

\begin{defn}
Let $\omega\in(\FF_{29})^{14}$ be a normalized differential. If $\omega\in X_{13}$, let $r$ be the codimension of the Zariski tangent space of $X_{13}$ at $\omega$.

We say that $\omega$ has \emph{Steiner type} $r_i$ if $\omega\in V_{r,i}\cap X_{13}$, where $V_{r,i}$ is Steiner’s heuristic component as defined in \cite{ste:2011}. Note that a differential form may have several Steiner types, since the $V_{r,i}$ intersect.

If $\omega$ has integer coefficients, we say it has Steiner type $r_i$ if its reduction mod~$29$ can be normalized to a form of Steiner type $r_i$. Again, a differential form may have several Steiner types, since different normalization centers of $\omega$ can yield different types.
\end{defn}

Johannes Steiner also assigned a selection of Jakob Kr\"oker’s solutions
to his heuristic components $V_{r,i}$ and, for each, computed all algebraic integral curves of degree at most $6$ having at least one smooth $\FF_{29}$-rational point. This yields a treasure trove of geometric information about these components, which we have used in \cite{newSolutions} and will also use here.

To our knowledge this is the only study that treats the entire $14$-dimensional parameter space of normalized degree-$3$ differential forms and the full ideal generated by the first thirteen focal values. There are, however, caveats:
\begin{enumerate}
\item It is not proved that the heuristic components $V_{r,i}$ are genuine components of $X_{13}$ in characteristic $29$, although Steiner’s tests provide strong evidence (with one doubtful case).
\item Even if they are components of $X_{13}$ in characteristic $29$, it is unknown whether they are components of $X_{\infty}$ in that characteristic (i.e., whether they admit a center). Many examples are Darboux integrable with low-degree invariant curves, but not all.
\item Even if they are components of $X_{\infty}$ in characteristic~29, persistence to characteristic~0 is unknown. The fourth column of Table~\ref{tab:steiner-vs-kroeker} lists the irreducible components in characteristic~0~known to us (\cite{ZoladekRational}, \cite{ZoladekCorrection}, \cite{survey}, \cite{torregrosa},\cite{newSolutions}); all these components are of Steiner type, but many predicted Steiner types remain unrealized in characteristic~0.
\end{enumerate}

\section{constructions}
\nosubsections

\newcommand{\minihead}[1]{%
  \par\medskip
  \noindent\underline{\emph{#1}}\enspace
}

In this section we use the methods developed in this paper to construct several codimension~$11$ components of the degree~$3$ center variety; most of which are, to our best knowledge, new.
We collect the needed ingredients in the following  proposition.

\begin{prop}[Blueprint for constructing codim-$11$ components] 
\label{pBlueprint}

Assume there exist
\begin{enumerate}
  \item a plane sextic $C=C_1\cup\cdots\cup C_r$ with simple singularities whose Milnor numbers add up to $18$ (i.e.\ $C$ is submaximal),
  \item a line $L_\infty$ bitangent to $C$ (after a projective change of coordinates
        we take $L_\infty$ to be the line at infinity),
  \item a degree-$3$ differential $1$-form $\omega$ having $C$ as an integral curve
        (existence of $\omega$ follows from (1)–(2)),
\end{enumerate}
such that the following additional properties hold.
\begin{enumerate}
\item[(a)] The matrix $M_{\eta'}$ from Theorem \ref{tEtaCriterion}
  has rank at most  $r+1$.
  \item[(b)] The points $\eta$-geometric points of $C\cup L_\infty$ do not lie on a conic.
  \item[(c)] $\omega$ has a zero outside $C\cup L_\infty$.
  \item[(d)] For some prime $p$, after reducing the coefficients of $\omega$ modulo $p$ and normalizing for Frommer’s algorithm, the first $13$ focal values vanish (this is automatic from (a) and (b)) and the associated $13\times 13$ Jacobian
  has rank $11$ over $\FF_p$ (this is not).
\end{enumerate}
Assume finally that
\begin{enumerate}
  \item[(e)] there exists a $1$-dimensional family of not projectively equivalent submaximal sextics with at least the same singularity
  basket as $C$ (this ist automatic for submaximal sextics) and the same component structure $C=C_1\cup\cdots\cup C_r$ (this is not)
\end{enumerate}
Then there is a codimension-$11$ component of the degree-$3$ center variety whose general member
is Darboux-integrable, with an invariant sextic of the type described in (1)–(2).
\end{prop}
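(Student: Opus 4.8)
The plan is to run the construction over $\CC$ to produce an irreducible family of Darboux-integrable centers, and then to use the characteristic-$p$ input (d) to certify that this family is a full component of the center variety. First I would deduce Darboux integrability of $\omega$. Since $\deg\omega-1=2$, hypothesis (b) says exactly that the $\eta$-geometric points of $C\cup L_\infty$ lie on no curve of degree $\deg\omega-1$, so together with the rank bound (a) Theorem~\ref{tEtaCriterion} applies and yields a nontrivial relation $\sum_i\alpha_iK_{C_i}=-d\omega$; by Darboux's theorem this gives a rational integrating factor $\mu=\prod_iC_i^{\alpha_i}$ (in the degenerate case $\sum_i\beta_iK_{C_i}=0$ one obtains a first integral directly, which is even better).

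Next I would locate a center. Let $a$ be the zero of $\omega$ outside $C\cup L_\infty$ furnished by (c). As in the section on zeros outside integral curves, $\omega(a)=0$ with $a\notin C$ forces $K_{C_i}(a)=0$, whence $d\omega(a)=-\sum_i\alpha_iK_{C_i}(a)=0$; thus the linear part of $\omega$ at $a$ is trace-free, and (being nondegenerate) $a$ is of center-focus type. Because $a\notin C\cup L_\infty$, the integrating factor $\mu$ is holomorphic and nowhere zero near $a$, so $dF=\mu\omega$ defines a local analytic first integral with an isolated critical point at $a$; a nondegenerate monodromic singularity carrying an analytic first integral is a center. Translating $a$ to the origin and normalizing the linear part places this form in the center variety $X_\infty\subset\mathcal N\cong\CC^{14}$, so in particular the first thirteen focal values vanish, as recorded in (d).

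Then I would assemble the family. Running the same argument over the $1$-parameter equisingular family of (e), and letting the centralizer torus $\TT$ of the normalized linear part (the linear coordinate changes fixing the origin and commuting with the linear part, $\dim\TT=2$) act, I obtain an irreducible $Z\subset X_\infty$. A parameter count gives $\dim Z=3$: the generic $\TT$-orbit is $2$-dimensional, and $Z/\TT$ is identified with the $1$-dimensional moduli of (e), because both the bitangent line and the extra zero are determined by $C$ up to finitely many choices. Since $14-3=11$, this is the asserted codimension.

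Finally I would upgrade $Z$ to a component using (d). The first thirteen focal values are polynomials with integer coefficients on $\mathcal N$, so a nonzero $11\times 11$ minor of their Jacobian modulo $p$ stays nonzero over $\QQ$; hence at the constructed member $\omega_0\in Z$ the Jacobian has rank at least $11$ over $\CC$, and $\dim T_{\omega_0}X_{13}\le 3$. As $X_\infty\subseteq X_{13}$, the local dimension of $X_\infty$ at $\omega_0$ is at most $3$; an irreducible $3$-dimensional $Z\subseteq X_\infty$ through $\omega_0$ is therefore a component, of codimension exactly $11$, with general member Darboux-integrable and invariant sextic as in (1)--(2). The main obstacle is exactly this dimension bookkeeping and characteristic transfer: one must know that the equisingular stratum of submaximal sextics with simple singularities has the expected codimension $18$ (so (e) genuinely supplies a $1$-parameter family), that generic stabilizers are finite, and that reduction modulo $p$ neither raises the Jacobian rank nor lowers $\dim Z$. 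By comparison, the $\eta$-theoretic steps (a)--(b) and the center criterion (c) are routine given Theorem~\ref{tEtaCriterion} and the section on zeros outside integral curves.
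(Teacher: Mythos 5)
Your proposal is correct and follows essentially the same route as the paper's proof: Theorem~\ref{tEtaCriterion} via (a)--(b) for Darboux integrability, a center at the extra zero furnished by (c) (the cofactors and hence $d\omega$ vanish there), a $1+2$-dimensional family built from (e) together with the two-dimensional rotation/scaling group acting on normalized forms to force codimension $\le 11$, and semicontinuity of the Jacobian rank under reduction mod $p$ from (d) to force codimension $\ge 11$, concluded by the same sandwich. The only points where the paper is more careful than you are the incidence-variety bookkeeping (Zariski-openness of (b)--(c) along the irreducible component of the family $\sW$, which you gloss with ``running the same argument over the family'') and the justification that normalization is actually possible---the paper shows the matrix of linear coefficients at the translated zero is \emph{symmetric} because of Darboux integrability and hence congruent to the identity over $\CC$; also, your closing worry about the equisingular stratum having expected codimension is moot, since (e) is a hypothesis of the proposition rather than something to be proved.
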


\newcommand{\sWirr}{\sW_{irr}}
\newcommand{\sCirr}{\sC_{irr}}
\newcommand{\sWnorm}{\sW_{norm}}
\newcommand{\sCnorm}{\sC_{norm}}
\newcommand{\sVnorm}{\sV_{norm}}
\newcommand{\sDnorm}{\sD_{norm}}
\newcommand{\sHnorm}{\sH_{norm}}

\begin{proof}
Let $\sC$ be the equisingular locus of plane sextics satisfying (1)–(2), and put $\sH$ for the vector space of degree-3 polynomial $1$-forms $P\dx + Q\dy$. Set
\[
  \sW := \{(\omega',C') \in \sH\times\sC \mid \text{$C'$ is an integral curve of $\omega'$}\}.
\]
Let $\pi_1:\sW\to\sH$ and $\pi_2:\sW\to\sC$ be the projections to the first and second factor, respectively. For $C'\in\sC$, the fiber of $\pi_2$ is the affine vector space
\[
  \pi_2^{-1}(C')=\sV_{C'}(3)=\{\omega'\in\sH \mid \text{$C'$ is an integral curve of $\omega'$}\}.
\]
By Theorem~\ref{pExpected} we have
\[
\dim \sV_{C'}(3) \ge \binom{3-6+3}{2} + \binom{3+1}{2} - (6-1)^2 + \deg X \;=\; \deg X - 19.
\]
Since $C'$ has total Milnor number $18$ by (1) and is bitangent to $L_\infty$ by (2), we get $\deg X=18+2=20$, hence $\dim \sV_{C'}(3)\ge 1$. In particular, $\pi_2$ is surjective.

By (1)–(3) we have a point $(\omega,C)\in\sW$. Let $\sWirr$ be the irreducible component of $\sW$ containing $(\omega,C)$.

Property (a) holds on $\sWirr$, since the $\eta$–matrix depends only on the local singularity types of $C\cup L_\infty$ and on the fixed component decomposition, which are constant along $\sW$. Property (b) is Zariski open and holds at $(\omega,C)$ by assumption, hence it holds on a nonempty open subset of $\sWirr$.

By Example~\ref{eSubMaximalZeros} the number of zeros of $\omega$ in $\PP^{2}\setminus(C\cup L_\infty)$ is at most $1$ and is lower semicontinuous for forms satisfying (1)–(3). Hence the locus where (c) holds is Zariski open in $\sWirr$; it contains $(\omega,C)$ and is therefore nonempty.

Let $\sWirr^{0}\subset\sWirr$ be the nonempty Zariski-open locus where (b) and (c) hold.
For any $(\omega',C')\in\sWirr^{0}$, condition (a) gives $\operatorname{rank} M_{\eta'}\le r+1$, and (b) says that the $\eta$-geometric points do not lie on a conic, i.e. on a curve of degree $\deg\omega'-1=2$.
Thus the hypotheses of Theorem~\ref{tEtaCriterion} are satisfied, and $\omega'$ is Darboux integrable.

Let $\sHnorm \subset \sH$ be the affine subspace of differential $1$-forms that vanish at the origin and whose linear part is $x\dx+y\dy$. This space has dimension $14$.
Now let $\sWnorm := \sWirr^{0} \cap (\sHnorm \times \sC)$ be the locally closed subvariety of $\sWirr^{0}$ whose differential forms are normalized and $\sCnorm := \pi_2(\sWnorm)$ the sextic curves that occur as integral curves of normalized differential forms in $\sWnorm$.

To see that $\sWnorm$ is nonempty let $(\omega_0,C_0)$ be the pair obtained from $(\omega,C)$ by translating the zero from (c) to the origin. All properties of the proposition are invariant under translation, so $(\omega_0,C_0)\in\sWirr^{0}$. Since $\omega_0$ is Darboux integrable and the origin is not on the integral curve $C_0$, the $2\times 2$ matrix $S$ of linear coefficients of $\omega_0$ is symmetric. Hence there exists $A\in\mathrm{GL}_2$ with $A^{\mathsf T}SA=I$, and after the linear change of variables $(x,y)\mapsto A^{-1}(x,y)$ we obtain $(\omega_{\mathrm{norm}},C_{\mathrm{norm}})\in\sWnorm$.

It follows that $\pi_1(\sWnorm)$ is a family of normalized Darboux–integrable degree-3 differential $1$-forms with a sextic integral curve satisfying (1)–(3) and (a)–(c).
Property (d) then implies that $\pi_1(\sWnorm)$ lies in a component $Y$ of the center variety having codimension at least $11$ in the $14$-dimensional affine space of all normalized degree-$3$ differential forms $\sHnorm$, i.e. it has dimension at most $3$.

Let $\sD \subset \sC$ be the $\mathrm{PGL}_3$–saturation of the family from (e), i.e. the union of all projective images of its members.
On $\sD$ the condition “has exactly the singularities prescribed in (1)–(2)” is Zariski open; since $C \in \sD$ satisfies (1)–(2), this open set is nonempty.
Set $\sDnorm := \sD \cap \sCnorm$.
Restricting the fibration $\sWnorm \to \sCnorm$ over $\sDnorm$ yields a subfamily
\[
  \sVnorm \longrightarrow \sDnorm.
\]
Then $\pi_1(\sVnorm)$ is also a family of normalized Darboux–integrable degree-3 differential $1$-forms with a sextic integral curve satisfying (1)–(3) and (a)–(c). We now estimate $\dim \pi_1(\sVnorm)$.

Recall that the space of normalized degree-3 forms $\sHnorm$ is invariant under rotations about the origin and under scalings
\[
  \omega_{\mathrm{norm}}(x,y)\ \longmapsto\ \frac{1}{\lambda^{2}}\,\omega_{\mathrm{norm}}(\lambda x,\lambda y).
\]
The only fixed form under this $2$-dimensional group is $x\dx+y\dy$; every other form has a $2$-dimensional orbit, and forms on the same orbit have projectively equivalent integral curves. By normalizing the differential forms of the curves given in (e) we obtain a $1$-dimensional family of forms in $\pi_1(\sVnorm)$ whose integral curves are not projectively equivalent. The saturation of this family by rotations and scalings still lies inside $\pi_1(\sVnorm)$ and has dimension at least $1+2=3$. Since the ambient space of normalized forms $\sHnorm$  has dimension $14$, it follows that
\[
  \codim \pi_1(\sVnorm) \le\ 14-3\ =\ 11.
\]
From (d) we already know that $\pi_1(\sWnorm) $ is contained in a component $Y$ of the center variety with $\codim Y \ge 11$. Because $\pi_1(\sVnorm) \subset \pi_1(\sWnorm) \subset Y$, we have
\[
  11 \le\ \codim Y \le \codim\pi_1(\sWnorm) \le \codim \pi_1(\sVnorm) \ \le\ 11,
\]
so all three codimensions are equal to $11$. In particular, $Y=\overline{\pi_1(\sWnorm)}=\overline{\pi_1(\sVnorm)}$ is an irreducible codimension-$11$ component of the degree-$3$ center variety whose generic element $(\omega_{\mathrm{norm}},C_{\mathrm{norm}})$ satisfies (1)–(3) and (a)–(e).
 \end{proof}
 
 To apply this proposition one could, in principle, proceed as follows:
\begin{enumerate}
  \item Find all possible singularity baskets and component structures that satisfy (1) and (2). This yields a large but finite list of cases.
  \item Select those with at least six $\eta$-geometric points.
  \item Select those for which $M_{\eta'}$ drops rank.
  \item Construct a $1$-dimensional family of examples.
  \item Check (b) and (c) for a single example.
  \item Reduce modulo a suitable finite field, normalize, and check (d).
\end{enumerate}
The main bottlenecks are step (4), which typically requires substantial hand work, and step (6), which often fails. In that case one has constructed a codimension-$11$ family that is not itself a component but lies inside a larger component. At present we cannot predict from the configuration whether this happens.

In this paper we use a shortcut. Jakob Kr\"oker~\cite{centerFocusDatabase} found many differential forms over the finite field $\FF_{29}$ that satisfy (d). Johannes Steiner \cite{ste:2011} computed the $\FF_{29}$-rational integral curves of degree at most $6$ for many of these examples. We scanned these examples for configurations satisfying (1), (2), and (a); the resulting list is given in Table~\ref{tSteiner}. Only for these configurations did we attempt (and solve) the manual construction step in characteristic $0$.

\begin{table}
\centering
\begin{tabular}{c c c c}
\hline
\textbf{Steiner type}  & \textbf{degrees} & \textbf{\# $\eta$-geom.\ pts} & \textbf{construction}\\
\hline
$11_2$   & $\{2,4\}$     & 6 & \ref{cA2A4}\\
$11_{18}$  & $\{1,1,4\}$   & 6 &  \ref{cA1A4}\\
$11_{25}$  & $\{2,4\}$     & 7 & \ref{c3A2}\\
$11_{27}$  & $\{1,1,4\}$   & 6 &  \ref{cA1A4}\\
$11_{53}$  & $\{1,1,4\}$   & 7 &  \ref{c3A1}\\
$11_{59}$  & $\{1,1,4\}$   & 6 &  \ref{cA1A4}\\
\hline
\end{tabular}

\quad

\caption{Conjectured codimension-$11$ components of the center variety over $\FF_{29}$ that contain examples satisfying (1)–(3), and (a)–(d) of Proposition~\ref{pBlueprint}, each with an invariant sextic whose component degrees are shown.} \label{tSteiner}
\end{table}

We present our constructions as follows:
\begin{enumerate}
  \item \textbf{Configuration.}\label{step:config}
        Propose a configuration of components and singularities that satisfies (1) and (2).
  \item \textbf{Darboux integrability.}\label{step:integrability}
        Compute $M_{\eta'}$ and verify (a).
  \item \textbf{Construction.}\label{step:explicit}
        Construct a $1$-dimensional family of curves (e).
         For one curve $C$ in the family, compute $\omega$; check (b) and (c).
\item \textbf{Reduction modulo $p$.}\label{step:reduction}
      Reduce mod $p$, normalize, and check (d).
\end{enumerate}
The computations are carried out in \texttt{Macaulay2}; the scripts are available online \cite{codim11M2}.

The first two constructions (Steiner types $11_2$ and $11_{25}$) start with a maximal quartic $C_4$ ($\mu=6$) and a conic $C_2$ having prescribed contact multiplicities with $C_4$: $\{2,2,2,2\}$ in the first case and $\{3,3,1,1\}$ in the second.
A contact of multiplicity $m$ between two distinct components contributes $2m-1$ to the Milnor number of the union (this applies when the contact occurs along a single branch at a simple singularity). Hence
\(
\mu =6+4\cdot 3=18
\)
in the first case and
\(
\mu=6+2\cdot 5+2\cdot1=18
\)
in the second. Thus $C_6=C_4\cup C_2$ is submaximal in both cases.

\subsection{\boldmath $A_2\!+\!A_4$-quartic \& contact conic (Steiner type $11_2$)}
\label{cA2A4}

\subsubsection{Configuration.}
Let $C_6=C_4\cup C_2$, where $C_4$ is a quartic and $C_2$ is a smooth conic. Assume the following hold:
\begin{enumerate}[label=(\roman*)]
  \item $C_4$ has an $A_2$ singularity at a point $A$ and an $A_4$ singularity at a point $B$.
  \item $C_2$ is tangent to $C_4$ at four distinct points, one of them being $B$. Denote the others by $U,V,W$.
  \item The line at infinity $L_\infty$ passes through $A$ with multiplicity $2$ and is tangent to $C_4$ at a further smooth point $T$.
\end{enumerate}
If no further singularities occur, then $C_6$ is submaximal: its singularities are
 $A_2$ at $A$, $D_7$ at $B$ and $A_3$ at the smooth contact points $U,V,W$
Hence the total milnor number is $2+3+3+3+7=18$. Such a configuration would satisfy $(1)$ and $(2)$ of Proposition \ref{pBlueprint}. 

\subsubsection{Darboux integrability.}
The $\eta$-geometric points of $C_6\cup L_\infty$ are $A,B,T,U,V,W$.
Looking up the values of $\eta'$ for the corresponding simple singularities in Table \ref{tEtaSimple}, we obtain
\[
M_{\eta'} =
\begin{pmatrix}
 0 & 10 & 2 & 7\\  
 2 &  6 & 0 & 5\\  
 2 &  2 & 0 & 3\\  
 0 &  2 & 2 & 3\\  
 0 &  2 & 2 & 3\\  
 0 &  2 & 2 & 3\\  
 1 &  4 & 2 & 5    
\end{pmatrix}
\quad
\begin{matrix}
\text{($D_7$ at $A$)}\\
\text{($D_5$ at $B$)}\\
\text{($A_3$ at $T$)}\\
\text{($A_3$ at $U$)}\\
\text{($A_3$ at $V$)}\\
\text{($A_3$ at $W$)}\\
\text{(the projection center)}
\end{matrix}
\]
where the columns are $(K_z,\,K_{C_4},\,K_{C_2},\,d\omega)$. We put $0$ in the column of any component that does not pass through the given point.

We observe that $M_{\eta'}\cdot(2,1,2,-2)^{\mathsf t} = 0$, so the rank of $M$ is at most $3$ and condition (a) is satisfied. Hence each $\omega$ having this configuration of integral curves would be Darboux-integrable.

\subsubsection{Explicit construction.}
It remains to realize the configuration described above and show that it lies in a family as in (e). Classically, any plane quartic carrying an $A_2$ and an $A_4$ singularity is projectively equivalent to
\[
  C_4 \colon  (xy-z^2)^2-xz^3 = 0,
\]
and in these coordinates the $A_2$ lies at $A=(1\!:\!0\!:\!0)$ while the $A_4$ lies at $B=(0\!:\!1\!:\!0)$; see \cite[page 25]{Hui1979}. A direct computation shows that there is a unique line through the $B$ which is tangent to $C_4$ at a smooth point; it is
\[
  L \colon 4y+z=0,
\]
with tangency point $T=(16\!:\!-1\!:\!4)$.

The space of conics in $\PP^2$ is $\PP^5$, and each imposed tangency to a given curve contributes a single condition. Hence the family of contact conics for $C_4$ is at least $1$-dimensional. It remains to exhibit one example with exactly the tangencies described above. For this we recall the following classical construction for contact curves. Let
\[
    M = \begin{pmatrix} F & G \\ G & H \end{pmatrix}
\]
a homogeneous symmetric matrix. Then $F=0$ is contact to $\det M=0$. Indeed
on $F = 0$ we have $\det M = G^2$ and therefore all intersection points are of multiplicity $2$.

Here we look at
\[
    M =  \left(\!\begin{array}{cc}
      -x^{2}-4xy-2xz-4yz+3z^{2}&2xy+xz+4yz-z^{2}\\
      2xy+xz+4yz-z^{2}&-4y\,z-z^{2}
      \end{array}\!\right)
\]
Since $\det M = -4C_4$ we have that
\[
    C_2 \colon  -x^{2}-4xy-2xz-4yz+3z^{2} = 0
\]
is contact to $C_4$. On also checks that $C_2$ is smooth, that the four contact points are distinct, and that one of them is $B$. Furthermore one can check that the six \(\eta\)-geometric points are not contained in a conic. This proves that a curve as in (1), (2) exist and that is satisfies (a), (b) and (e).

We now construct $\omega$ as in (3) and check (c):
Applying the substitution $z\mapsto z-4y$ sends the bitangent line $L$ to the line at infinity $z=0$, and using computer algebra one finds a unique (up to scaling) degree-$3$ differential $1$-form $\omega$ for which both $C_4$ and $C_2$ are integral curves. This $\omega$ has a zero at $(71\!:\!10\!:\!51)$; after translating so that this zero is at the origin, an affine expression is:

\begin{align*}
    \omega = (&867x^{2}y-81498xy^{2}-194208y^{3} \\
    &+170x^{2}+6868xy-64702y^{2} \\
    &+145x+3450y)dx \\
      + (&1734x^{3}+112710x^{2}y-138720xy^{2}-2663424y^{3} \\
      &+5066x^{2}+372130xy+209984y^{2}\\
      &+3450x+279380y) dy
    \end{align*}
Its linear part is symmetric, hence \(\omega\) has a center at \((0,0)\).

\subsubsection{Reduction modulo $p$.}
Over the finite field $\FF_{29}$ this differential form can be normalized to
\begin{align*}
&(x^{3}+4x^{2}y+3xy^{2}-2y^{3}-4x^{2}+14xy+6y
      ^{2}+x)\dx \\
      &+(-14x^{3}+5x^{2}y-2y^{3}-3x^{2}+11xy+3y^{2}+y
)\dy.
\end{align*}

With Frommer's Algorithm we can compute (as a sanity check) that the first $13$ focal values vanish, and that the Jacobian matrix of the first $13$ focal values has rank $11$. This shows (d) and we can apply Proposition \ref{pBlueprint} to obtain a codim 11 component of the center variety in degree $3$.

\subsection{\boldmath $3A_2$-quartic \& $3,3$-contact conic (Steiner type $11_{25}$)}
\label{c3A2}

\subsubsection{Configuration.}
Let $C_6=C_4\cup C_2$, where $C_4$ is a quartic and $C_2$ is a smooth conic. Assume the following hold:
\begin{enumerate}[label=(\roman*)]
  \item $C_4$ has three $A_2$ singularities at points $U,V,W$.
  \item $C_2$ has contact of order $3$ with $C_4$ at two distinct points $A,B$, and meets $C_4$ transversely at two further points $R,S$.
  \item The line at infinity $L_\infty$ passes through $S$ with multiplicity $2$ and is tangent to $C_4$ at a further smooth point $T$.
  \item The points $A,B,R,S,T,U,V,W$ do not lie on a conic.
\end{enumerate}
If no further singularities occur, then $C_6$ is submaximal: its singularities are
$A_2$ at $U,V,W$, $A_5$ at $A,B$, and $A_1$ at $R,S$.
Hence the total Milnor number is $2+2+2+5+5+1+1=18$. Such a configuration satisfies (1) and (2) of Proposition~\ref{pBlueprint}.

\subsubsection{Darboux integrability.}
The $\eta$-geometric points are $U,V,W,A,B,S,T$. Looking up the quasi-homogeneous
values of $\eta'$ at these points (in this order) yields
\[
\begin{pmatrix}
 0 &  6 & 0 & 5\\  
 0 &  6 & 0 & 5\\  
 0 &  6 & 0 & 5\\  
 0 &  3 & 3 & 4\\  
 0 &  3 & 3 & 4\\  
 1 &  1 & 1 & 2\\  
 2 &  2 & 0 & 3\\  
 1 &  4 & 2 & 5    
\end{pmatrix}
\quad
\begin{matrix}
\text{($A_2$ at $U$)}\\
\text{($A_2$ at $V$)}\\
\text{($A_2$ at $W$)}\\
\text{($A_5$ at $A$)}\\
\text{($A_5$ at $B$)}\\
\text{($D_4$ at $S$)}\\
\text{($A_3$ at $T$)}\\
\text{(projection center)}
\end{matrix}
\]
A quick check yields $M_\eta \cdot (4, 5,3,-6)^t = 0$ and hence $\rank M_\eta \le 3$. This proves $(a)$.
    
\subsubsection{Explicit construction.}
It remains to realize the configuration described above and to show that it lies in a family as in (e).
Classically, any plane quartic with three $A_2$ singularities is projectively equivalent (after a projective change of coordinates) to
\[
  C_4:\ x^2y^2 + y^2z^2 + z^2x^2 - 2xyz(x + y + z)=0,
\]
and in these coordinates the $A_2$ singularities $U,V,W$ lie at the coordinate points; see \cite[page 14]{Hui1979}.
A direct computation shows that
\[
   C_2:\ 9x^2 - 80xy - 432y^2 - 80xz + 1880yz - 432z^2 = 0
\]
is a smooth conic meeting $C_4$ with intersection multiplicity $3$ at
$A=(36\!:\!9\!:\!4)$ and $B=(36\!:\!4\!:\!9)$, and transversely at
$R=(4\!:\!9\!:\!36)$ and $S=(4\!:\!36\!:\!9)$.
Furthermore,
\[
  L:\ 27x + 125y - 512z = 0
\]
meets $C_4$ and $C_2$ transversely at $S$ and is tangent to $C_4$ at
the point $T=(1600\!:\!-576\!:\!-225)$.
One checks that the points $U,V,W,A,B,R,S,T$ do not lie on a conic.
Thus these curves realize the configuration described above.

To see that this configuration lies in a family as in (e), we do a dimension count.
The space of conics in $\PP^2$ has dimension $5$, and the existence of two points of
multiplicity-$3$ contact with $C_4$ imposes $2\cdot 2=4$ conditions.
The two remaining simple intersections impose no further condition. Hence $C_2$
lies in a $1$-dimensional family with at least the prescribed contacts.
Furthermore one can check that the six $\eta$-geometric points are not contained in a conic. This proves that a curve as in (1), (2) exist and that it satisfies (a), (b) and (e).

We now construct $\omega$ as in (3) and check (c):
Applying the substitution $z\mapsto z+\frac{27x+125y}{512}$ sends the bitangent line $L$ to the line at infinity $z=0$, and using computer algebra one finds a unique (up to scaling) degree-$3$ differential $1$-form $\omega$ for which both $C_4$ and $C_2$ are integral curves. This $\omega$ has a zero at $(256\!:\!-256\!:\!273)$; after translating so that this zero is at the origin, an affine expression is:
\begin{align*}
    \omega = (& -38233377x^{3}+1624359555x^{2}y-7790988387xy^{2}+
      8443514625y^{3}\\
      &-2731921920x^{2}+33127564288xy+
      1632816640y^{2}-33512488960x \\
      &-17154703360y)dx \\
      - (&439348455x^{3}-1706788629x^{2}y-462700875xy^{2}+
      916085625y^{3}\\
      &+9614212608x^{2}-34486000640xy+
      4299742720y^{2}+17154703360x-\\
      &27556577280y) dy
    \end{align*}
Its linear part is symmetric, hence \(\omega\) has a center at \((0,0)\).

\subsubsection{Reduction modulo $p$.}
Over the finite field $\FF_{29}$ this can be normalized to
\begin{align*}
&(-8x^{3}-13x^{2}y+5xy^{2}-11y^{3}-10x^{2}-2xy+14y^{2}+x)\dx \\
      &+(14x^{3}+10x^{2}y+12xy^{2}-9y^{3}-2x^{2}+14xy-14y^{2}+y)\dy.
\end{align*}

With Frommer's Algorithm we can compute  that the first $13$ focal values vanish, and that the Jacobian matrix of the first $13$ focal values has rank $11$. This shows (d) and we can apply Proposition \ref{pBlueprint} to obtain a codim 11 component of the center variety in degree $3$.

\subsection{\boldmath $A_1{+}A_4$-quartic \& two bitangents (Steiner types~$11_{18}$, $11_{27}$, $11_{59}$)} \label{eThreeBitangents}
\label{cA1A4}

 \subsubsection{Configuration.}
Let $C_4$ be a quartic, and let $L,L',L''$ be (generalized) bitangents—i.e., lines meeting $C_4$ with
intersection multiplicity $2$ at two distinct points.  Assume
\begin{enumerate}[label=(\roman*)]
  \item $C_4$ has an $A_1$ singularity at $A$ and an $A_4$ singularity at $B$;
  \item $L$ intersects $C_4$ in two smooth points $S$ and $T$;
  \item $L'$ intersects $C_4$ in $A$ and a further smooth point $U$
  \item $L''$ intersects $C_4$ in $B$ and a further smooth point $V$
\end{enumerate}

We now have three possibilities for the choice of the line at infinity:
\begin{enumerate}
\item $L_\infty=L$ and $C_6=C_4\cup L'\cup L''$;
\item $L_\infty=L'$ and $C_6=C_4\cup L\cup L''$;
\item $L_\infty=L''$ and $C_6=C_4\cup L\cup L'$.
\end{enumerate}
In all three cases the configuration satisfies (1)–(2) of Proposition~\ref{pBlueprint}.

\subsubsection{Darboux integrability.}
In all three cases $C_6\cup L_\infty=C_4\cup L\cup L'\cup L''$, and the
$\eta$-geometric points of $C_6\cup L_\infty$ are $A,B,S,T,U,V$.
Looking up the values of $\eta'$ for the corresponding simple singularities in Table~\ref{tEtaSimple}, we obtain:
\[
M_{\eta'} =
\begin{pmatrix}
 0 & 1 & 0 &  2 & 2 \\  
 0 & 0 & 2 & 10 & 7 \\  
 2 & 0 & 0 &  2 & 3 \\  
 2 & 0 & 0 &  2 & 3 \\  
 0 & 2 & 0 &  2 & 3 \\  
 0 & 0 & 2 &  2 & 3 \\  
 1 & 1 & 1 &  4 & 5     
\end{pmatrix}
\quad
\begin{matrix}
\text{($D_4$ at $A$)}\\
\text{($D_7$ at $B$)}\\
\text{($A_3$ at $S$)}\\
\text{($A_3$ at $T$)}\\
\text{($A_3$ at $U$)}\\
\text{($A_3$ at $V$)}\\
\text{(projection center)}
\end{matrix}
\]
where the columns are $(K_L,K_{L'},K_{L''},K_{C_4},\,d\omega)$.
Note that $$M_\eta\cdot(2,2,2,1,-2)^{\mathsf T}=0,$$ hence $\operatorname{rank} M_{\eta'}\le 4$.
Moreover, choosing $L$, $L'$, or $L''$ as the line at infinity merely permutes the first three columns,
so the rank of $M_{\eta'}$ is unchanged. This proves (a) in all three cases.

\subsubsection{Explicit construction.}
It remains to realize the configuration described above and to show that it lies in a family as in (e).
Classically, any plane quartic with an $A_1$ and an $A_4$ singularity is projectively equivalent to
\[
  C_4:\ (xz+y^2)^2 + xy^3 - \lambda\,x^2yz = 0, \qquad \lambda\not= 0,-1,
\]
with the $A_1$ at $A=(1\!:\!0\!:\!0)$ and the $A_4$ at $B=(0\!:\!0\!:\!1)$; see \cite[page 25]{Hui1979} after a projective change of coordinates.
A direct computation shows that, for $\lambda\neq -1,0$, the following lines are (generalized) bitangents satisfying (ii)–(iv):
\begin{align*}
  L&:\ \lambda^{2}x + 4(\lambda-1)y - 16z = 0,\\
  L'&:\ y + 4(\lambda+1)z = 0,\\
  L''&:\ \lambda^{2}x - 4(\lambda+1)y = 0.
\end{align*}
Thus the configuration occurs in a $1$-parameter family as in (e).
From now on we fix $\lambda=1$ and compute the differential forms as in (3) for the three choices of $L_\infty$.

Sending $L$ to infinity via the linear change $z\mapsto \tfrac{x-z}{16}$ we obtain
\begin{align*}
    \omega = (&-3x^{3}+26x^{2}y-64xy^{2}-416y^{3}+6x^{2}+14xy-16y^{2}-3x-16y)\dx \\
      + (&16x^{3}+16x^{2}y-256xy^{2}-768y^{3}-56x^{2}-16xy-128y^{2}+40x) \dy,
\end{align*}
with a zero (off the invariant curves) at $(28\!:\!-9\!:\!40)$.

Sending $L'$ to infinity via $z\mapsto -\tfrac{y-z}{8}$ we obtain
\begin{align*}
    \omega' = (&-83xy^{2}-136y^{3}+38xy+56y^{2}-3x-16y)\dx \\
      + (&6x^{3}+63x^{2}y-56xy^{2}-256y^{3}-35x^{2}-88xy+64y^{2}+40x) \dy,
\end{align*}
with a zero at $(63\!:\!-18\!:\!260)$.

Finally, sending $L''$ to infinity via $(y,z)\mapsto\bigl(\tfrac{x-z}{8},\,y\bigr)$ we obtain
\begin{align*}
    \omega'' = (&-15x^{3}-1664x^{2}y+34816xy^{2}-131072y^{3}\\
    &+33x^{2}+1088xy-2048y^{2}-21x-192y+3
)\dx \\
      + (&1264x^{3}-4096x^{2}y-196608xy^{2}-1808x^{2}\\
      &+5120xy+592x-1024y-48) \dy,
\end{align*}
with a zero at $(664\!:\!-189\!:\!5850)$.

\subsubsection{Reduction modulo $p$.}
Over the finite field $\FF_{29}$ all three differential forms can be normalized for Frommer’s algorithm.
In each case the first $13$ focal values vanish, and the Jacobian matrix of the first $13$ focal values has rank $11$.
This verifies (d), so Proposition~\ref{pBlueprint} yields three codimension-$11$ components of the degree-$3$ center variety.
We also find that $\omega$, $\omega'$, and $\omega''$ have Steiner type $11_{58}$, $11_{27}$, and $11_{18}$, respectively.

\subsection{\boldmath $3A_1$–quartic with a flex line and a hyperflex line (Steiner type $11_{53}$)}
\label{c3A1}

\subsubsection{Configuration.}

\newcommand{\Lflex}{L_{\mathrm{flex}}}
\newcommand{\Lhyper}{L_{\mathrm{hyper}}}

Let $C_6=C_4\cup \Lflex \cup \Lhyper$, where $C_4$ is a quartic and $\Lflex,\Lhyper$ are lines. Assume:
\begin{enumerate}[label=(\roman*)]
  \item $C_4$ has three $A_1$ singularities at points $U,V,W$.
  \item $C_4$ has a flex (inflection) point at $A$, and $\Lflex$ is its flex line; that is,
        $\Lflex$ meets $C_4$ with multiplicity $3$ at $A$ and multiplicity $1$ at a further point $B$.
  \item $C_4$ has a hyperflex at $B$, and $\Lhyper$ is the hyperflex line at $B$; that is,
        $\Lhyper$ meets $C_4$ with multiplicity $4$ at $B$.
  \item $C_4$ admits a bitangent $L_\infty$ with (smooth) contact points $S$ and $T$.
\end{enumerate}
If no further singularities occur, then $C_6$ is submaximal: its singularities are
$A_1$ at $U,V,W$, $A_5$ at $A$, and $D_{10}$ at $B$, so the total Milnor number is
$3+5+10=18$. Hence the configuration satisfies \emph{(1)} and \emph{(2)} of Proposition~\ref{pBlueprint}.

\subsubsection{Darboux integrability.}
The $\eta$-geometric points are $U,V,W,A,B,S,T$. Looking up the quasi-homogeneous
values of $\eta'$ at these points (in this order) yields
\[
\begin{pmatrix}
 0 & 0 & 0 &  2 & 2 \\  
 0 & 0 & 0 &  2 & 2 \\  
 0 & 0 & 0 &  2 & 2 \\  
 0 & 6 & 0 &  6 & 8 \\  
 0 & 2 & 8 &  8 &10 \\  
 2 & 0 & 0 &  2 & 3 \\  
 2 & 0 & 0 &  2 & 3 \\  
 1 & 1 & 1 &  4 & 5     
\end{pmatrix}
\quad
\begin{matrix}
\text{($A_1$ at $U$)}\\
\text{($A_1$ at $V$)}\\
\text{($A_1$ at $W$)}\\
\text{($A_5$ at $A$)}\\
\text{($D_{10}$ at $B$)}\\
\text{($A_3$ at $S$)}\\
\text{($A_3$ at $T$)}\\
\text{(projection center)}
\end{matrix}
\]
with column order $(K_z,\,K_{\Lflex},\,K_{\Lhyper},\,K_{C_4},\,d\omega)$. Zeros indicate components not passing through the given point.
Moreover,
\[
M_{\eta'}\cdot (3,\,2,\,1,\,6,\,-6)^{\mathsf T}=0,
\]
hence $\operatorname{rank} M_{\eta'}\le 4$. This proves $(a)$.

\subsubsection{Explicit construction.}
It remains to realize the configuration above and to exhibit a $1$–parameter family as in (e).
Consider the morphism
\[
  \varphi:\ \PP^1 \longrightarrow \PP^2
\]
defined by
\[
    (s:t) \mapsto \bigl(s^{4}:s t^{3} \colon (s-t)^{2}(s+\lambda t)^{2} \bigr), \qquad
     \lambda\in\CC\setminus\{-1,0,1\}.
\]
Let $C_4$ be the image of $\varphi$, and set
\[
  A=\varphi(1\!:\!0),\quad B=\varphi(0\!:\!1),\quad S=\varphi(1\!:\!1),\quad T=\varphi(\lambda\!:\!-1).
\]
Then:
\begin{enumerate}[label=(\roman*)]
\item $C_4$ is rational; by the genus formula a general member is expected to have three $A_1$–singularities.
\item $\Lflex=\{y=0\}$ is a flex line at $A$ and meets $C_4$ again (transversely) at $B$.
\item $\Lhyper=\{x=0\}$ is a hyperflex line at $B$.
\item $L_\infty=\{z=0\}$ is bitangent to $C_4$ at $S$ and $T$.
\end{enumerate}
Thus we obtain a $1$–parameter family as in (e). For $\lambda=4$ one checks that
$C_6=C_4\cup\Lflex\cup\Lhyper$ has exactly the singularities listed in the configuration,
and that the $\eta$-geometric points $U,V,W,A,B,S,T$ do not lie on a conic, establishing (b).

Using computer algebra, one finds a unique (up to scaling) degree-3 differential $1$-form
\begin{align*}
    \omega
    = (& -91x^{2}y-819xy^{2}-4160y^{3}-13xy-240y^{2}-4y) \dx \\
        + (&52x^{3}-1638x^{2}y+6656xy^{2}-68x^{2}+672xy+16x) \dy
\end{align*}
for which $C_6$ is an integral curve, as in (3). The form $\omega$ has a zero at $(28\!:\!-12\!:\!325)$ lying outside $C_6\cup L_\infty$, which establishes (c).

\subsubsection{Reduction modulo $p$.}
Over the finite field $\FF_{29}$, after reduction and normalization at the zero found above, $\omega$ becomes
\begin{align*}
&\bigl(-9x^{2}y+5xy^{2}-4y^{3}-10x^{2}+10xy+2y^{2}+x\bigr)\,dx \\
+&\bigl(x^{3}+3x^{2}y+9xy^{2}-3y^{3}-6x^{2}-10xy+10y^{2}+y\bigr)\,dy.
\end{align*}

Using Frommer’s algorithm, the first $13$ focal values vanish, and the Jacobian matrix of these $13$ focal values has rank $11$.
This verifies (d), and by Proposition~\ref{pBlueprint} we obtain a codimension-$11$ component of the degree-$3$ center variety.

\subsection{\boldmath Cuspidal cubic and a 3-tangent star  (Steiner types $11_{50}$, $11_{64}$)}
\label{cCuspStar}

As a final construction, we show that two of the components in \cite{torregrosa} are recovered by our blueprint.
Noticing that these example fits seamlessly into the $\eta$ framework was a key motivation for the present systematic study of submaximal sextics with a bitangent.

\subsubsection{Configuration.}

\newcommand{\Ccusp}{C_{\mathrm{cusp}}}
\newcommand{\Cstar}{C_{\mathrm{star}}}

Let $C_6=\Ccusp\cup\Cstar$ be the union of two plane cubics. Assume:
\begin{enumerate}[label=(\roman*)]
  \item $\Ccusp$ has a cusp at $A$.
  \item $\Cstar$ has a $D_4$–singularity at $B$, i.e.\ $\Cstar=L_1\cup L_2\cup L_3$ is the union of three distinct lines concurrent at $B$.
  \item For $i=1,2,3$, the line $L_i$ meets $\Ccusp$ with intersection multiplicity $2$ at a smooth point $P_i$
        and transversely at a second smooth point $Q_i$.
  \item The line at infinity $L_\infty$ passes through two of the tangency points, say $Q_1$ and $Q_2$, and is transverse to both branches there.
\end{enumerate}
If no further singularities occur, then $C_6$ is submaximal: its singularities are
$A_2$ at $A$, $D_4$ at $B$, $A_3$ at each $P_i$, and $A_1$ at each $Q_i$, hence the total Milnor number is
\[
\mu(C_6)=2+4+3\cdot 3+3\cdot 1=18.
\]
In addition, $L_\infty$ is a generalized bitangent to $C_6$ so the configuration satisfies (1)–(2) of Proposition~\ref{pBlueprint}.

\subsubsection{Darboux integrability.}
The $\eta$-geometric points are $A$, $B$, $P_1$, $P_2$, $P_3$, $Q_1$, $Q_2$. Looking up the quasi-homogeneous
values of $\eta'$ at these points (in this order) yields
\[
\begin{pmatrix}
 0 & 0 & 6 & 5  \\  
 0 & 3 & 0 & 2  \\  
 0 & 2 & 2 & 3  \\  
 1 & 1 & 1 & 2  \\  
 1 & 3 & 3 & 5     
\end{pmatrix}
\quad
\begin{matrix}
\text{($A_2$ at $A$)}\\
\text{($D_4$ at $B$)}\\
\text{($A_3$ at $P_1,P_2,P_3$)}\\
\text{($D_4$ at $Q_1,Q_2$)}\\
\text{(projection center)}
\end{matrix}
\]
with column order $(K_z,\,K_{\Cstar},\,K_{\Ccusp}, \,d\omega)$. Moreover,
\[
M_{\eta'}\cdot (3,4,5,-6)^{\mathsf T}=0,
\]
hence $\operatorname{rank} M_{\eta'}\le 4$. This proves $(a)$.

\subsubsection{Explicit construction.}
These configurations occur in a natural family. Fix a cuspidal cubic $\Ccusp$, and let $B$ be a general point.
Projection from $B$ induces a degree-$3$ map
$$\pi:\widetilde{\Ccusp}\to\PP^1;$$
since $\widetilde{\Ccusp}\cong\PP^1$, the Riemann–Hurwitz formula gives $\deg R_\pi=4$.
Equivalently, there are exactly four lines through $B$ meeting $\Ccusp$ with multiplicity two: one is the line through the cusp $A$, and the other three are simple tangents at smooth points $P_1,P_2,P_3$.
Let $L_1,L_2,L_3$ denote these tangents, and write $Q_i$ for the third (transverse) intersection of $L_i$ with $\Ccusp$.
Set $\Cstar:=L_1\cup L_2\cup L_3$, and choose $L_\infty$ to be the line through two of the third intersection points, say $Q_1$ and $Q_2$ (for a general choice, $L_\infty$ is transverse to the branches there).

As $B$ varies, this yields a $2$–dimensional family of such configurations; modulo projectivities preserving $\Ccusp$ (a $1$–dimensional group), the projective equivalence classes form a $1$–dimensional family.
This proves (e).
Conditions (b) and (c) hold for a general choice and can be checked explicitly for a sample member, exactly as in the previous examples.

\subsubsection{Reduction modulo $p$.}
A subtle field-of-definition issue appears here. Over an algebraically closed field every differential
form with symmetric linear part can be normalized (i.e.\ brought to linear part $x\,dx+y\,dy$).
Over nonclosed fields such as $\RR$ or $\FF_{29}$ this need not be possible, because it amounts to
diagonalizing a symmetric $2\times2$ matrix over the base field.

In the previous examples we circumvented this by choosing a member of the family whose linear part
\emph{is} diagonalizable over the base field. In the present family, however, diagonalizability over
$\FF_{29}$ occurs precisely when the tangent lines $L_1$ and $L_2$, are \emph{conjugate over}
$\FF_{29}$ (each defined only over a quadratic extension, while their union $L_1\cup L_2$ is defined
over $\FF_{29}$). The same phenomenon occurs over $\RR$ (cf.\ \cite{torregrosa}), where one only obtains a real
equation for $L_1\cup L_2$. (The parallel behavior over \(\RR\) and \(\FF_{29}\) is not structural; it need not persist over other finite fields.)

Respecting this constraint, we nevertheless find examples over $\FF_{29}$ that can be normalized and
satisfy condition (d); these are of Steiner type~$11_{64}$. Because in this type examples only $L_3$ is
rational over $\FF_{29}$, only this lines appears in the Kr\"oker–Steiner database, and for the
same reason this case is absent from Table~\ref{tSteiner}.

\subsubsection{Switching bitangents.}
As in Example~\ref{eThreeBitangents}, we may also take one of the generalized bitangents $L_1,L_2,L_3$ as the line at infinity.
Over $\CC$ this yields two further families: one obtained from $L_1$ (equivalently from $L_2$, since $L_1$ and $L_2$ are conjugate) and one from $L_3$.
Over $\RR$ and over $\FF_{29}$ the first family does not occur, because the line at infinity must be defined over the base field. The second does occur and gives a family of Steiner type $11_{50}$.

\def\cprime{$'$} \def\cprime{$'$}

\end{document}